\documentclass [11pt,oneside]{amsart}

\reversemarginpar \textwidth=14cm \textheight=19cm

\usepackage{amssymb} \usepackage{amsfonts} \usepackage{amsmath}
\usepackage{amsthm} \usepackage{epsfig} 

\usepackage{caption}

\addtolength{\captionmargin}{1cm}

\newtheorem{lemma}{Lemma}
\newtheorem{teo}[lemma]{Theorem}
\newtheorem{prop}[lemma]{Proposition}
\newtheorem{cor}[lemma]{Corollary}

\theoremstyle{definition}
\newtheorem{defn}[lemma]{Definition}

\newtheorem{rem}[lemma]{Remark} 

\newcommand{\sz}{\ensuremath {\mathrm{pr}}}
\newcommand{\D}{\ensuremath {\Delta}}

\newcommand{\matN}{\ensuremath {\mathbb{N}}}
\newcommand{\R} {\ensuremath {\mathbb{R}}}

\newcommand{\calC} {\ensuremath {\mathcal{C}}}
\newcommand{\calP} {\ensuremath {\mathcal{P}}}

\newcommand{\finedimo}{{\hfill\hbox{$\square$}\vspace{2pt}}}

\author{Roberto Frigerio}
\author{Alessandro Sisto}

\address{Dipartimento di Matematica \\
Universit\`a di Pisa \\
Largo B.~Pontecorvo 5 \\
56127 Pisa, Italy}

\address{Scuola Normale Superiore\\
piazza dei cavalieri 7 \\
56127 Pisa, Italy}
 
\email{frigerio@dm.unipi.it, a.sisto@sns.it}

\title[Characterizing hyperbolic spaces and real trees]{Characterizing hyperbolic spaces
and real trees}

\subjclass[2000]{53C23, 20F67 (secondary)}

\keywords{Gromov-hyperbolic, real tree, Rips condition, asymptotic cone, detour}

\thanks{}

\begin{document}

\begin{abstract}
Let $X$ be a geodesic metric space. Gromov proved that there exists
$\varepsilon_0>0$ such that if every sufficiently large triangle $\D$
satisfies the Rips condition with constant $\varepsilon_0\cdot \sz(\D)$, where
$\sz(\D)$ is the perimeter $\D$, then $X$ is hyperbolic. We give 
an elementary proof of this fact, also giving an estimate for $\varepsilon_0$. 
We also show that if all the triangles $\D\subseteq X$
satisfy the Rips condition with constant $\varepsilon_0\cdot \sz(\D)$, 
then $X$ is a real tree.

Moreover, we point out how this characterization of hyperbolicity 
can be used to
improve a result by Bonk, and to provide  an easy proof
of the (well-known) fact 
that $X$ is hyperbolic if and only if every asymptotic cone of
$X$ is a real tree. 
\end{abstract}

\maketitle

\section{Preliminaries and statements}\label{construction:section}

Let $(X,d)$ be a metric space. A map $\gamma\colon [0,1]\to X$
is a \emph{geodesic} if there exists $k\geq 0$ such that $d(\gamma(t),\gamma(s))=k|t-s|$
for every $t,s\in [0,1]$. The space $X$ is \emph{geodesic} if any pair of points in $X$
can be connected by a geodesic, and \emph{uniquely geodesic} if such a geodesic is unique.
With an abuse, we identify geodesics and their
images, and we let $[x,y]$ denote a geodesic joining $x$ to $y$, even though this
geodesic is not unique.
A \emph{triangle} with vertices $x,y,z$ is the union of three geodesics
$[x,y],[y,z],[z,x]$, called \emph{sides}, and will be denoted by $\D(x,y,z)$.
We denote by $\sz (\D)$ the perimeter of $\D$, \emph{i.e.}~we set
$\sz(\D(x,y,z))=d(x,y)+d(y,z)+d(z,x)$.

\subsection{Gromov hyperbolic spaces and real trees}

For $A\subseteq X$ and $\varepsilon>0$, 
we set $N_\varepsilon (A)=\{x\in X:\, d(x,A)\leq \varepsilon\}$
A triangle with sides $\ell_1,\ell_2,\ell_3$ satisfies the Rips condition
with constant $\delta$ if for $\{i,j,k\}=\{1,2,3\}$ we have $\ell_i\subseteq N_{\delta} (\ell_j\cup \ell_k)$. 
A geodesic space $X$ is \emph{$\delta$-hyperbolic} if every triangle
in $X$ satisfies the Rips condition with constant $\delta$, and it is
\emph{hyperbolic} if it is $\delta$-hyperbolic for some $\delta\geq 0$.

A $0$-hyperbolic geodesic space is also called a \emph{real tree}. 
It is easily seen that a real tree is uniquely geodesic, and
that if $[x,y],[y,x]$ are geodesics in a real tree such that
$[x,y]\cap [y,z]=\{y\}$, then $[x,z]=[x,y]\cup [y,z]$.

\subsection{The main results}

Let $X$ be a fixed geodesic space. For every triangle $\D$ in $X$ we 
provide a measure of how much non-hyperbolic $\D$ is by setting
$$
\delta (\D) =  \inf  \{\delta:\,
\Delta\ {\rm satisfies\ the\ Rips\ condition\ with\ constant}\ \delta\}.
$$ 
Of course, 
for every $\D$ we have $4\delta (\D)\leq \sz(\D)$. 
 
Let
$\Omega_X \colon \R^+\to \R^+$ be defined as follows:
$$
\Omega_X (t)=\sup \{\delta (\D),\ \D\ {\rm triangle\ in}\ X\ {\rm with}\ \sz(\D)\leq t\}.
$$
By the very definition, $X$ is hyperbolic if and only if $\Omega_X$ is bounded.
Our main result, which will be proved in Section~\ref{main:sec}, is the following:

\begin{teo}\label{main:teo}
Let $X$ be a geodesic space. Then $X$ is hyperbolic if and only
if $$\limsup_{t\to\infty} \frac{\Omega_X (t)}{t}< \frac{1}{32}.$$
\end{teo}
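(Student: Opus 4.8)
The ``only if'' direction is immediate: if $X$ is $\delta_0$-hyperbolic, then $\delta(\D)\le\delta_0$ for every triangle $\D$, so $\Omega_X\le\delta_0$ and hence $\Omega_X(t)/t\to 0<\tfrac1{32}$. All the content lies in the converse. Assuming $\limsup_{t\to\infty}\Omega_X(t)/t<\tfrac1{32}$, I would fix a constant $c$ with $\limsup_{t\to\infty}\Omega_X(t)/t<c<\tfrac1{32}$ and a scale $t_0$ with $\Omega_X(t)\le c\,t$ for all $t\ge t_0$, and keep in mind the elementary bound $4\delta(\D)\le\sz(\D)$ (equivalently $\Omega_X(t)\le t/4$ for every $t$). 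It then suffices to prove that $\Omega_X$ is bounded.

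The plan is to argue by contradiction, running a descent on the perimeter (equivalently, a strong induction on $\sz(\D)$). Suppose $\Omega_X$ is unbounded, and fix a large threshold $N$ --- how large, in terms of $t_0$ and $c$, is settled at the end. Since $\Omega_X$ is unbounded there is a triangle $\D$ with $\delta(\D)>N$; choose one whose perimeter is essentially minimal with this property, so that every triangle of substantially smaller perimeter satisfies the Rips condition with constant $N$. From $4\delta(\D)\le\sz(\D)$ we get $\sz(\D)>4N$, so (taking $N\ge t_0/4$) the linear bound applies: $N<\delta(\D)\le c\,\sz(\D)$, whence $\sz(\D)>N/c>32N$. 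This is exactly where $c<\tfrac1{32}$ enters, and it says that the minimal bad triangle is \emph{huge} relative to $N$. Unwinding $\delta(\D)>N$, some side, say $[x,y]$, contains a point $p$ with $d\bigl(p,[y,z]\cup[z,x]\bigr)>N$.

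The heart of the argument is to subdivide $\D$ into a bounded number of strictly smaller geodesic triangles --- for instance, cut off the part of $\D$ near a vertex with a transversal geodesic, then split the remaining quadrilateral along a diagonal --- choosing the cuts so that each piece has perimeter substantially below $\sz(\D)$ and so that the cuts stay at distance $>N$ from $p$. Since each piece is smaller than the minimal bad triangle, it is $N$-thin; applying the Rips condition on the pieces and tracking how $p$ and its nearest-point projections onto the new geodesics move --- each move being at most $N$ --- one relocates $p$ to a point on a side of one of the pieces $\D'$ which is still at distance $>N$ from the other two sides of $\D'$. The abundant room $\sz(\D)>32N$ is precisely what lets these finitely many displacements, each $\le N$, be absorbed. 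But $\delta(\D')>N$ with $\sz(\D')<\sz(\D)$ contradicts minimality. Hence $\Omega_X$ is bounded and $X$ is hyperbolic.

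The main obstacle is making this subdivision step quantitative, and the bookkeeping there is what fixes the constant $\tfrac1{32}$: one must arrange the cuts both to keep $p$ at distance $>N$ and to remove a definite proportion of the perimeter, and then bound, via the Rips condition on the (smaller) pieces, the successive displacements of $p$ and of its projections, checking that the surviving point remains $>N$-far from \emph{both} remaining sides of $\D'$. (One can equivalently organise the same descent around geodesics with common endpoints, or around the four-point inequality for Gromov products.) With the estimates in hand, taking $N$ large enough that every triangle met in the descent has perimeter $\ge t_0$ --- so that $\Omega_X(\cdot)\le c\,(\cdot)$ is available throughout --- closes the contradiction.
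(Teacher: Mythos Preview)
Your outline has a real gap in the descent step. You invoke minimality to conclude that every piece $\D'$ of the subdivision is $N$-thin, and then assert that one can ``relocate $p$ to a point on a side of one of the pieces $\D'$ which is still at distance $>N$ from the other two sides of $\D'$''. These two statements are directly contradictory: if $\D'$ is $N$-thin then by definition no point on any side of $\D'$ lies at distance $>N$ from the union of the other two. If instead you chain through the pieces using their $N$-thinness, you obtain $d(p,[y,z]\cup[z,x])\le kN$ after $k$ steps, which does not contradict $\delta(\D)>N$ for any $k\ge 1$. And if you try to sidestep this by arranging the cut geodesics themselves to stay at distance $>N$ from $p$, note that in a general geodesic space a geodesic between two points far from $p$ may pass arbitrarily close to $p$; there is no mechanism to prevent this. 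The slogan ``$\sz(\D)>32N$ gives room to absorb displacements $\le N$'' does not help: what you need is for the total displacement to be strictly less than $\delta(\D)$, and minimality only gives $\delta(\D)>N$, not $\delta(\D)>kN$.

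The paper's argument is structurally different and does not use a minimal-perimeter triangle. Set $\lambda=\delta(\D)$ for a nearly extremal $\D$, and let $x$ be a point on one side with $d(x,\text{other two sides})=\lambda$, realized at some $y$. One works \emph{locally near $x$}: take $x_1,x_2$ on the same side at distance $\lambda/3$ from $x$, a point on $[x,y]$ at distance $\lambda/3$ from $x$, and a point $y'$ on the other sides with $d(x_1,y')\le\lambda$. One then passes through a short chain of auxiliary triangles built from these points, each of perimeter at most a fixed multiple of $\lambda$ (Lemma~\ref{lemma} handles the last step). The linear bound $\Omega_X(t)\le\kappa t$ is applied directly to each auxiliary triangle, giving thinness $\le(\text{const})\cdot\kappa\lambda$, and the accumulated displacement comes out to at most $32\kappa\lambda+12\kappa<\lambda$. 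The crucial point is that the thinness of the auxiliary triangles scales with $\lambda$ via the linear bound; minimality would only give thinness $\le N$, with no relation to $\lambda$, and that is why your framework cannot close the argument.
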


Using tools from plane conformal geometry, Gromov proved in~\cite{gro1}
that a constant $\varepsilon_0>0$ exists
such that if $\limsup_{t\to\infty} \Omega (t)/t\leq \varepsilon_0$, then $X$ is hyperbolic.
Our proof of Theorem~\ref{main:teo} is completely elementary, and gives
for $\varepsilon_0$ the estimate of $1/32$.

Observe that by the very definitions we have 
$$ \sup_t \frac{\Omega_X (t)}{t}
=\sup \left\{ \frac{\delta(\D)}{\sz(\D)},\, \D\ {\rm triangle\ in} \ X\right\}.$$
The argument developed for proving Theorem~\ref{main:teo} 
also gives the following:

\begin{teo}\label{main3:teo}
Let $X$ be a geodesic space. Then $X$ is a real tree if and only if
$$\sup \left\{\frac{\delta(\D)}{\sz(\D)},\, \D\ {\rm triangle\ in}\ X\right\}
< \frac{1}{32}.$$
\end{teo}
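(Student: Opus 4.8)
The plan is to re-run the argument that proves Theorem~\ref{main:teo}, using that it is essentially scale-free. The ``only if'' direction is immediate: a real tree is $0$-hyperbolic, so every triangle satisfies the Rips condition with constant $0$, whence $\delta(\D)=0$ for all $\D$ and the supremum in question equals $0<1/32$.

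For the converse, set $c=\sup\{\delta(\D)/\sz(\D):\D\ \text{a triangle in}\ X\}=\sup_{t>0}\Omega_X(t)/t$ and assume $c<1/32$. Then in particular $\limsup_{t\to\infty}\Omega_X(t)/t\le c<1/32$, so $X$ is hyperbolic by Theorem~\ref{main:teo}; it remains to upgrade this to $c=0$, which amounts to $\delta(\D)=0$ for every triangle $\D$, i.e.\ to $X$ being a real tree. This is where the proof of Theorem~\ref{main:teo} is to be recycled. I expect that proof to attach to an arbitrary triangle $\D$ of perimeter $p$ a ``detour'': starting from a point $w$ on one side that realizes $\delta(\D)$ up to an arbitrarily small error, one reaches the union of the other two sides along a path assembled by repeatedly invoking the Rips condition on a family of auxiliary triangles $\D_1,\D_2,\dots$ built inside $\D$, obtaining an estimate of the form $\delta(\D)\le\sum_i\delta(\D_i)$ in which the auxiliary perimeters sum to a definite fraction of $p$, say $\sum_i\sz(\D_i)\le\lambda p$ with $\lambda<1$ an absolute constant. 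The point where the threshold $1/32$ is spent is precisely the claim that $c<1/32$ leaves enough room for the construction to close up with such a $\lambda$. (In the proof of Theorem~\ref{main:teo} one only iterates this down to a fixed scale $T$ and then estimates crudely by $\delta(\D_i)\le\sz(\D_i)/4$, which already yields boundedness; under the present global hypothesis there is no preferred scale, so nothing halts the iteration.)

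Granting such an inequality, I would feed in the global bound $\delta(\D_i)\le c\,\sz(\D_i)$ to obtain $\delta(\D)\le\sum_i\delta(\D_i)\le c\sum_i\sz(\D_i)\le\lambda c\,\sz(\D)$; since $\D$ is arbitrary, taking the supremum over all triangles gives $c\le\lambda c$, and $\lambda<1$ forces $c=0$. (Equivalently, since the same inequality applies to each $\D_i$, iterating it $k$ times and then using $\delta\le\sz/4$ on the leaves gives $\delta(\D)\le\lambda^{k}\sz(\D)/4$ for every $k$, hence $\delta(\D)=0$ directly.) Thus every triangle of $X$ satisfies the Rips condition with constant $0$, i.e.\ $X$ is $0$-hyperbolic, that is, a real tree.

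The main obstacle is the geometric heart of Theorem~\ref{main:teo} itself: designing the detour so that the auxiliary perimeters provably sum to a fixed fraction $\lambda<1$ of $\sz(\D)$, and verifying that this is exactly what happens when the ratio $\delta(\D')/\sz(\D')$ stays below $1/32$ for every triangle $\D'$; once that inequality is available, the bootstrap above is very short. A secondary point to watch is that each auxiliary object $\D_i$ produced by the construction must be a genuine triangle of $X$, with every side an honest geodesic, so that the hypothesis $\delta(\D_i)\le c\,\sz(\D_i)$ really does apply to it; the fact that the supremum defining $c$ need not be attained is harmless, since the key inequality $\delta(\D)\le\lambda c\,\sz(\D)$ is derived for each individual triangle before any supremum is taken.
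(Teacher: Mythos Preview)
Your instinct to recycle the argument of Theorem~\ref{main:teo} is exactly right, and the paper's proof is indeed a one-paragraph reduction to that argument. However, your description of what that argument actually produces is off, and the discrepancy matters.

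You anticipate an inequality of the shape $\delta(\D)\le\sum_i\delta(\D_i)$ with $\sum_i\sz(\D_i)\le\lambda\,\sz(\D)$ for an absolute $\lambda<1$, from which your bootstrap $c\le\lambda c\Rightarrow c=0$ would follow. The proof of Theorem~\ref{main:teo} does not yield this. The auxiliary triangles built there have perimeters controlled by $\delta(\D)$, \emph{not} by $\sz(\D)$: with $\lambda=\delta(\D)$ one constructs triangles of perimeter at most $2\lambda$, $(10/3)\lambda$, $(14/3)\lambda$, $(40/3)\lambda+6$, $(26/3)\lambda+6$, summing to $32\lambda+12$. Feeding in the global bound $\delta(\cdot)\le c\,\sz(\cdot)$ therefore yields
\[
\lambda\ \le\ 32c\lambda+12c,
\]
not $\lambda\le \lambda' c\,\sz(\D)$. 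Your contraction $c\le\lambda c$ never appears; and while $\lambda\le 32c\lambda+12c$ with $32c<1$ would give $\lambda=0$ were it not for the additive $12c$, that additive term is really there (it comes from the ``$+1$'' slack in choosing a near-maximizing triangle and in Lemma~\ref{lemma}), and it only gives the uniform bound $\lambda\le 12c/(1-32c)$, i.e.\ hyperbolicity again, not $\lambda=0$.

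The paper disposes of this obstacle in one line you are missing: both the hypothesis $\sup_t\Omega_X(t)/t<1/32$ and the conclusion ``$X$ is a real tree'' are invariant under rescaling the metric, so if $\Omega_X(\mu)>0$ for some $\mu$ one may rescale so that $\Omega_X(\mu)>1/\beta+1$. This makes $\lambda$ large, the additive constant becomes negligible, and the contradiction from Theorem~\ref{main:teo} runs verbatim. Without this rescaling (or a careful reworking of Lemma~\ref{lemma} replacing the additive constants by arbitrarily small $\varepsilon$'s), your plan does not close.
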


Theorems~\ref{main:teo} and~\ref{main3:teo} will be proved in Section~\ref{main:sec}.

\subsection{Asymptotic cones}

In Section~\ref{cor:sec} we will show how Theorem~\ref{main:teo}
can be used to provide short proofs (and a slight improvement) of other known characterizations of 
hyperbolic spaces. In order to do this, we first need the definition
of \emph{asymptotic cone} of a metric space. Roughly speaking, 
the asymptotic cone of a metric space gives a picture of the metric
space as ``seen from infinitely far away''. It was introduced by Gromov in~\cite{gropol},
and formally defined in~\cite{wilkie}. 

A \emph{filter} on $\matN$ is a set $\omega\subseteq \calP(\matN)$ satisfying the following conditions:
\begin{enumerate}
\item
$\emptyset\notin \omega$;
\item
$A,B\in \omega\ \Longrightarrow\  A\cap B\in \omega$;
\item 
$A\in \omega,\ B\supseteq A\ \Longrightarrow\ B\in\omega$.
\end{enumerate}
For example, the set of complements of finite subsets of $\matN$
is a filter on $\matN$, known as the \emph{Fr\'echet filter} on $\matN$.

A filter $\omega$ is a \emph{ultrafilter} if for every $A\subseteq\matN$
we have either $A\in\omega$ or $A^c\in\omega$, where $A^c :=\matN\setminus A$.
An ultrafilter is \emph{non-principal} if it does not contain any finite subset
of $\matN$. 

It is readily seen that a filter is a ultrafilter if and only if it is maximal
with respect to inclusion. Moreover, an easy application of Zorn's Lemma
shows that any filter is contained in a maximal one. Thus, non-principal
ultrafilters exist (just take any maximal filter
containing the Fr\'echet filter). 

Let a non-principal ultrafilter $\omega$ on $\matN$ be fixed from now on.
If $X$ is a topological space, and $(x_n)\subseteq X$ is a  
sequence in $X$, 
we say that $\omega - \lim x_n =x_\infty$ if 
for every neughbourhood $U$ of $x_\infty$ the set
$\{n\in\matN:\, x_n\in U\}$ belongs to $\omega$.
It is easily seen that if $X$ is Hausdorff then the $\omega$-limit above,
if it exists, is unique. Moreover, any sequence in any compact space admits
a $\omega$-limit. For example, any sequence $(a_n)$ in
$[0,+\infty]$ admits a unique $\omega$-limit.

Now let $(X,d)$ be a metric space, $(x_n)\subseteq X$ be a sequence of
base-points, and $(d_n)\subset \R^+$ a sequence of rescaling factors
diverging to infinity. Let $\calC$ be the set of sequences $(y_n)\subseteq X$
such that $\omega - \lim d(x_n,y_n)/d_n<+\infty$, and consider the equivalence
relation defined on $\calC$ as follows:
$$
(y_n)\sim (z_n)\quad \Longleftrightarrow \quad
\omega-\lim \frac{d(y_n,z_n)}{d_n}=0.
$$
We set $X_\omega ((x_n),(d_n))=\calC/\sim$, end endow it with the well-defined
distance $d_\omega$ such that $$d_\omega ([(y_n)],[(z_n)])=\omega-\lim \frac{d(y_n,z_n)}
{d_n}.$$ 

\begin{defn}\label{cone:def}
The metric space $(X_\omega((x_n),(d_n)),d_\omega)$ is the \emph{asymptotic cone}
of $X$ with respect to the ultrafilter $\omega$, 
the basepoints $(x_n)$ and the rescaling factors $(d_n)$.
\end{defn}

As stated in~\cite{gro1,gro2}, a space $X$ is hyperbolic if and only if
every asymptotic cone of $X$ is a real tree (see~\cite{dru1} for an elementary proof). 
We will show in Section~\ref{cor:sec}
how Theorem~\ref{main:teo} easily implies this characterization of hyperbolicity
(see Proposition~\ref{main3:prop}).

\subsection{Detours}

The notion of \emph{detour} we are now going to recall was introduced by Bonk in~\cite{bonk},
where a characterization of hyperbolicity was given in terms of detour growth (see~Theorem~\ref{bonk:teo}).
Let $(X,d)$ be a geodesic space and let $t>0$. A \emph{$t$-detour} is a continuous map $\gamma:[0,1]\to X$
such that there exist a geodesic $[\gamma(0),\gamma(1)]$ and a point $z\in [\gamma(0),\gamma(1)]$
such that $d(x,{\rm Im}\, \gamma)\geq t$. The detour growth function $G_X\colon (0,\infty)\to (0,\infty]$
is defined as follows:
$$
G_X (t)=\inf \{ {\rm lenght} (\gamma):\, \gamma\ {\rm is\ a}\ t-{\rm detour} \}.
$$
Note that $G_X (t)=\infty$ if and only if there exist no rectifiable $t$-detours in $X$,
\emph{e.~g.}~if $X$ is a real tree (see Lemma~\ref{tree:lemma}).
The following result is proved in~\cite{bonk}:

\begin{teo}[Bonk]\label{bonk:teo}
A geodesic space $X$ is hyperbolic if and only if 
$$
\lim_{t\to\infty} \frac{G_X (t)}{t}=+\infty.
$$
\end{teo}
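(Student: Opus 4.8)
The plan is to deduce Bonk's theorem (Theorem~\ref{bonk:teo}) from our Theorem~\ref{main:teo} together with the detour-based estimates one expects between $\Omega_X$ and $G_X$. Concretely, I would establish a pair of inequalities relating the detour growth function to the ``defect'' function $\Omega_X$, and then translate the growth condition $G_X(t)/t\to\infty$ back and forth with $\limsup_{t\to\infty}\Omega_X(t)/t<1/32$.

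For the implication that hyperbolicity forces $G_X(t)/t\to\infty$, I would argue directly: if $X$ is $\delta$-hyperbolic and $\gamma$ is a rectifiable $t$-detour avoiding a point $z$ on a geodesic $[\gamma(0),\gamma(1)]$ by distance $\geq t$, then a standard argument (subdivide $\gamma$ into pieces joined by geodesics, apply the Rips condition along the resulting triangles, and track how the $2^k$-fan of geodesics from $\gamma(0)$ moves) shows $\mathrm{length}(\gamma)$ must grow at least like $t\cdot 2^{t/(\text{const}\cdot\delta)}$ — in particular exponentially in $t$, so $G_X(t)/t\to\infty$. This is the classical ``geodesics are exponentially efficient in hyperbolic spaces'' estimate and should be stated as a lemma.

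For the converse, suppose $X$ is not hyperbolic; by Theorem~\ref{main:teo} we then have $\limsup_{t\to\infty}\Omega_X(t)/t\geq 1/32$, so there is a sequence of triangles $\D_n=\D(x_n,y_n,z_n)$ with $\sz(\D_n)\to\infty$ and $\delta(\D_n)\geq c\cdot\sz(\D_n)$ for some fixed $c>0$. Such a triangle gives a point $p$ on one side, say $[x_n,y_n]$, with $d(p,[y_n,z_n]\cup[z_n,x_n])\geq c'\sz(\D_n)\geq c'' \, t_n$ where $t_n:=d(x_n,y_n)$. Concatenating $[x_n,z_n]$ and $[z_n,y_n]$ produces a rectifiable path from $x_n$ to $y_n$ of length $\leq \sz(\D_n)$ that stays far from $p\in [x_n,y_n]$; after reparametrizing this is a $(c''t_n)$-detour of length $O(t_n)$, so $G_X(c''t_n)\leq O(t_n)$, i.e.\ $G_X$ does not satisfy $G_X(t)/t\to\infty$. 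Combining the two directions yields the theorem.

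The main obstacle I anticipate is bookkeeping the constants: one needs the detour produced in the converse direction to have length comparable to the separation distance it realizes (not just to the perimeter, which could a priori be much larger than the separation), so some care is required in choosing which side of the triangle to detour around and in bounding $\sz(\D)$ in terms of the realized defect — here one uses that a triangle with large $\delta(\D)/\sz(\D)$ cannot be too ``thin in one direction'', together with $4\delta(\D)\leq\sz(\D)$. A secondary technical point is that $G_X$ is defined via \emph{rectifiable} detours while a general detour need only be continuous; but for the converse we build the detour out of geodesics so rectifiability is automatic, and for the forward direction $G_X(t)=\infty$ when no rectifiable detour exists, which only helps. Finally, since Theorem~\ref{main:teo} is only a $\limsup$ statement, the ``slight improvement'' over Bonk alluded to in the abstract is precisely that one gets a quantitative lower bound on the growth rate of $G_X$ in terms of the hyperbolicity constant, which drops out of the exponential estimate in the forward lemma.
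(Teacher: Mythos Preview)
The paper does not give a self-contained proof of Theorem~\ref{bonk:teo}: it is quoted as Bonk's result and cited from~\cite{bonk}. What the paper does prove is the sharper Theorem~\ref{main2:teo}, and Bonk's theorem drops out of the cycle of implications in Proposition~\ref{main3:prop}. So the relevant comparison is between your sketch and that cycle.

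For the converse direction (superlinear $G_X$ implies hyperbolicity) you and the paper argue identically: a triangle $\D$ with large $\delta(\D)$ yields, by taking the two opposite sides as a path, a $\delta(\D)$-detour of length at most $\sz(\D)-2\delta(\D)$, hence $G_X(\Omega_X(t)-\varepsilon)\leq t$, and one feeds this into Theorem~\ref{main:teo}. Your worries about constants are unnecessary: from $\delta(\D_n)\geq c\,\sz(\D_n)$ one gets directly $G_X(\delta(\D_n))\leq \sz(\D_n)\leq \delta(\D_n)/c$ with $\delta(\D_n)\to\infty$; there is no ``thinness'' issue to rule out. For the forward direction you take a genuinely different route from the paper. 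You propose the classical subdivision argument giving the exponential lower bound $G_X(t)\gtrsim 2^{t/\delta}$ in a $\delta$-hyperbolic space; the paper instead goes through asymptotic cones, proving $(1)\Rightarrow(2)\Rightarrow(3)$ in Proposition~\ref{main3:prop}: if $G_X(t_n)/t_n$ stayed bounded, the detours would survive in some cone and contradict Lemma~\ref{tree:lemma}. Your argument is more elementary and yields an explicit rate; the paper's argument avoids the combinatorics but imports the ultrafilter machinery.

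One correction: your final paragraph misidentifies the ``slight improvement'' over Bonk. It is not the exponential estimate in the forward direction (Bonk already has that); it is the weakening of the hypothesis in the \emph{converse} direction from $\lim G_X(t)/t=\infty$ to $\liminf G_X(t)/t>30$, which is exactly what the quantitative link $G_X\leftrightarrow\Omega_X$ combined with the constant $1/32$ in Theorem~\ref{main:teo} buys.
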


Using Theorem~\ref{main:teo}, in Section~\ref{cor:sec} we prove the following:

\begin{teo}\label{main2:teo}
A geodesic space $X$ is hyperbolic if and only if 
$$
\liminf_{t\to\infty} \frac{G_X (t)}{t} > 30.
$$  
\end{teo}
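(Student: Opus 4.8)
The plan is to prove Theorem~\ref{main2:teo} by playing the detour growth function $G_X$ against the function $\Omega_X$ of Theorem~\ref{main:teo}, in both directions. The easy direction is the ``only if'': if $X$ is hyperbolic then by Bonk's Theorem~\ref{bonk:teo} we have $G_X(t)/t\to+\infty$, so in particular $\liminf_{t\to\infty} G_X(t)/t > 30$. (One should first record Lemma~\ref{tree:lemma}, the observation that a real tree has no rectifiable detours, so that $G_X\equiv\infty$ there and the inequality is vacuously true in that degenerate case; but for the genuine statement Bonk's theorem does all the work.) So the entire content is in the converse: assuming $\liminf_{t\to\infty} G_X(t)/t > 30$, I must produce a bound showing $\limsup_{t\to\infty}\Omega_X(t)/t < 1/32$ and then invoke Theorem~\ref{main:teo}.

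For the converse I would argue contrapositively: suppose $X$ is not hyperbolic, so by Theorem~\ref{main:teo} we have $\limsup_{t\to\infty}\Omega_X(t)/t \geq 1/32$, hence there are triangles $\D_k=\D(x_k,y_k,z_k)$ with $\sz(\D_k)\to\infty$ and $\delta(\D_k)/\sz(\D_k)$ bounded below by something approaching $1/32$. The key step is to convert a ``fat'' triangle into a short detour. Given such a $\D_k$, by definition of $\delta(\D)$ there is a point $w$ on one side, say $w\in[x_k,y_k]$, whose distance to $[y_k,z_k]\cup[z_k,x_k]$ is close to $\delta(\D_k)$. Now consider the concatenation $\gamma$ of the geodesic $[x_k,w]$ followed by $[w, ?]$ — more precisely, I would build a detour with endpoints $x_k$ and $y_k$ (so the ``reference geodesic'' $[\gamma(0),\gamma(1)]$ is the side $[x_k,y_k]$ containing the fat point $w$) by routing $\gamma$ along the two other sides $[x_k,z_k]$ and $[z_k,y_k]$. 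Then $w$ is a point of $[\gamma(0),\gamma(1)]$ with $d(w,\operatorname{Im}\gamma)\geq \delta(\D_k) - o(\sz(\D_k))$, so $\gamma$ is (roughly) a $\delta(\D_k)$-detour, and its length is $d(x_k,z_k)+d(z_k,y_k) \leq \sz(\D_k)$. Therefore $G_X(\delta(\D_k)) \leq \sz(\D_k) \leq \delta(\D_k)/(1/32) \cdot (1+o(1)) = 32\,\delta(\D_k)(1+o(1))$, which (after letting $k\to\infty$ and $t=\delta(\D_k)\to\infty$, noting $\delta(\D_k)\to\infty$ since $\delta(\D_k)\gtrsim\sz(\D_k)/32$) gives $\liminf_{t\to\infty} G_X(t)/t \leq 32$. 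This is not quite good enough to contradict the hypothesis $>30$, so the constants must be sharpened.

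The main obstacle, and where the real work lies, is squeezing the constant from $32$ down past $30$. The slack comes from two places. First, the reference geodesic in the detour need not be a full side of the triangle: I can shorten it to the sub-geodesic of $[x_k,y_k]$ between the feet of the other two sides, or more cleverly, observe that near a fattest point $w$ the two ``distant'' sides both come within $\approx\delta(\D_k)$ only far away, so I can splice in short geodesic shortcuts from points on $[x_k,z_k]$ and $[z_k,y_k]$ back toward $[x_k,y_k]$, producing a detour of length noticeably less than $\sz(\D_k)$ while keeping $w$ at distance $\geq(1-o(1))\delta(\D_k)$. Second, one has the crude bound $4\delta(\D)\leq\sz(\D)$ always available, and combining it with the fatness estimate $\delta(\D_k)\geq(1/32-o(1))\sz(\D_k)$ gives geometric constraints on where the sides can go that let one trim the detour further. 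I expect that a careful bookkeeping along these lines — essentially re-running the geometric estimates from the proof of Theorem~\ref{main:teo} but tracking the detour length rather than the Rips constant — yields $G_X(t)/t \leq 30$ along a sequence $t\to\infty$ whenever $X$ is non-hyperbolic, i.e.\ $\liminf_{t\to\infty}G_X(t)/t\leq 30$, which is the desired contrapositive. The threshold $30$ in the statement is presumably exactly what this optimized computation produces, matched against the $1/32$ of Theorem~\ref{main:teo}, so the challenge is purely to make the constants line up rather than to find a new idea.
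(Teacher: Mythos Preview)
Your ``only if'' direction is fine (the paper actually routes through asymptotic cones rather than quoting Bonk, but your shortcut is legitimate). The genuine gap is in the converse, and it is not where you think it is.

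You correctly build the detour: given a triangle $\D$ with $\delta(\D)=d(w,\ell_2\cup\ell_3)$ for some $w\in\ell_1$, the path $\gamma$ along $\ell_2\cup\ell_3$ joins the endpoints of $\ell_1$ and stays at distance $\geq\delta(\D)$ from $w$. But you then bound ${\rm length}(\gamma)=d(x_k,z_k)+d(z_k,y_k)$ by $\sz(\D)$, which is needlessly crude. The length of $\gamma$ is \emph{exactly} $\sz(\D)-{\rm length}(\ell_1)$, and since both endpoints of $\ell_1$ lie on $\ell_2\cup\ell_3$ we have $d(w,\text{each endpoint})\geq\delta(\D)$, hence ${\rm length}(\ell_1)\geq 2\delta(\D)$. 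Therefore
\[
G_X(\delta(\D)-\varepsilon)\;\leq\;\sz(\D)-2\delta(\D)+2\varepsilon.
\]
Now if $\liminf_{t\to\infty} G_X(t)/t=\alpha>30$, and $\Omega_X$ is unbounded, then along any sequence with $\delta(\D_k)\to\infty$ you get $\sz(\D_k)\geq(\alpha+2-o(1))\delta(\D_k)$, i.e.\ $\Omega_X(t)/t\leq 1/(\alpha+2)+o(1)<1/32$, and Theorem~\ref{main:teo} finishes. That is the entire computation; the ``$30$'' is just $32-2$.

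So the elaborate programme you sketch --- splicing in shortcuts, re-running the estimates from the proof of Theorem~\ref{main:teo}, exploiting $4\delta(\D)\leq\sz(\D)$ --- is not needed and is not what the paper does. The single missing idea is the trivial inequality ${\rm length}(\ell_1)\geq 2\delta(\D)$, which you overlooked when you wrote ${\rm length}(\gamma)\leq\sz(\D_k)$ instead of ${\rm length}(\gamma)=\sz(\D_k)-{\rm length}(\ell_1)$.
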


\subsection{Looking for optimal constants}
A very natural problem is to compute (or to give better estimates on)
the largest constants 
which could replace $1/32$ in the statements of Theorems~\ref{main:teo},~\ref{main3:teo}.
By Theorem~\ref{main:teo}, the set 
$$\{\varepsilon>0:\, 
{\rm every\ geodesic\ space}\
X\ {\rm with}\ \limsup_{t\to\infty} \frac{\Omega_X (t)}{t} <\varepsilon\ {\rm is\ hyperbolic}\}$$
is non-empty. Being bounded, such set admits a lowest upper bound, which
is readily seen to be a maximum, and will be denoted by
$\varepsilon_H$. In the same way, it makes sense to define
$\varepsilon_T$ as the largest constant such that every geodesic space
$X$ with $\sup_t \Omega_X (t)/t <\varepsilon_T$ is a real tree.

The following proposition is proved in Section~\ref{final:sec}, and provides
an upper bound for $\varepsilon_H,\varepsilon_T$:

\begin{prop}\label{eucli:prop}
For every $t>0$ we have
$$
\Omega_{\R^2} (t)=\frac{1}{2}\cdot \left(\frac{\sqrt{5}-1}{2}\right)^{\frac{5}{2}}\cdot t
\approx 0.15\cdot t.
$$
\end{prop}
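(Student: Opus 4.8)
The plan is to turn the computation of $\Omega_{\R^2}$ into a one–variable optimization over the similarity classes of Euclidean triangles.

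\emph{Reduction.} For a triangle $\D$ in $\R^2$ and $\lambda>0$ one has $\delta(\lambda\D)=\lambda\,\delta(\D)$ and $\sz(\lambda\D)=\lambda\,\sz(\D)$, so $\delta(\D)/\sz(\D)$ depends only on the similarity class of $\D$. Rescaling an arbitrary triangle to perimeter $t$ then gives $\Omega_{\R^2}(t)=t\cdot c$, where $c:=\sup_{\D}\delta(\D)/\sz(\D)$ and the supremum is over all triangles; so it suffices to show $c=\tfrac12\bigl(\tfrac{\sqrt5-1}{2}\bigr)^{5/2}$.

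\emph{A formula for $\delta(\D)$.} Since in $\R^2$ the sides of $\D$ are straight segments, $\delta(\D)=\max_{i}\sup_{p\in\ell_i}d(p,\ell_j\cup\ell_k)$ over $\{i,j,k\}=\{1,2,3\}$. Fix a side $\ell$ whose endpoints carry interior angles $\varphi$ and $\psi$. As $p$ runs along $\ell$, its distance to the line spanned by either adjacent side is an affine function of the arclength that vanishes at the corresponding endpoint; these two functions agree at the point of $\ell$ lying on the angle bisector from the opposite vertex, and a short computation shows that there both perpendicular feet lie inside the respective sides. When $\varphi,\psi$ are both acute the feet in fact always lie inside, so $\sup_{p\in\ell}d(p,\ell_j\cup\ell_k)=\dfrac{|\ell|\sin\varphi\sin\psi}{\sin\varphi+\sin\psi}$; when one of $\varphi,\psi$ is obtuse a brief case analysis --- the foot leaves its side only where the distance to the \emph{other} adjacent side is already smaller than this value --- shows that the supremum is still bounded above by this expression, with equality realized on the side opposite the obtuse angle. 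Ordering the angles of $\D$ as $\alpha\ge\beta\ge\gamma$, using the law of sines, and noting that $\sin\beta+\sin\gamma$ is the least of the three sums $\sin\beta+\sin\gamma,\ \sin\gamma+\sin\alpha,\ \sin\alpha+\sin\beta$ (because $\alpha$ being the largest angle forces $\sin\alpha\ge\sin\beta,\sin\gamma$), one obtains in all cases
$$\frac{\delta(\D)}{\sz(\D)}=F(\alpha,\beta,\gamma):=\frac{\sin\alpha\sin\beta\sin\gamma}{(\sin\alpha+\sin\beta+\sin\gamma)(\sin\beta+\sin\gamma)}.$$
(To dispose of the obtuse case one also uses the elementary inequality $\sin\alpha\ge 1+\cos\alpha$, valid for $\alpha\in[\pi/2,\pi)$, equivalently $\sin\theta+\cos\theta\ge 1$ for $\theta=\pi-\alpha\in(0,\pi/2]$.)

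\emph{The optimization.} It remains to maximize $F$ over $\alpha+\beta+\gamma=\pi$, $\alpha\ge\beta\ge\gamma>0$. Fix $\alpha$ and put $u=\cos\frac{\beta-\gamma}{2}$; the identities $\sin\beta\sin\gamma=u^2-\sin^2(\alpha/2)$, $\sin\beta+\sin\gamma=2\cos(\alpha/2)\,u$ and $\sin\alpha+\sin\beta+\sin\gamma=2\cos(\alpha/2)\bigl(u+\sin(\alpha/2)\bigr)$ give, after cancellation,
$$F=\frac{\sin(\alpha/2)\,\bigl(u-\sin(\alpha/2)\bigr)}{2\cos(\alpha/2)\,u},$$
which is strictly increasing in $u$; hence for fixed $\alpha$ the maximum is at $\beta=\gamma$, i.e.\ $u=1$. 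Writing $s=\sin(\alpha/2)\in[1/2,1)$ this leaves $F=\dfrac{s(1-s)}{2\sqrt{1-s^2}}$, so $F^2=\dfrac{s^2(1-s)}{4(1+s)}$; the numerator of its derivative is proportional to $s(1-s-s^2)$, so the unique interior critical point is $s=\tfrac{\sqrt5-1}{2}$, which lies in $[1/2,1)$ and is a maximum. Using $s^2=1-s$ one finds $F^2=\tfrac14 s^{5}$, hence $F=\tfrac12 s^{5/2}=\tfrac12\bigl(\tfrac{\sqrt5-1}{2}\bigr)^{5/2}$. Since $F$ is continuous on the compact set of triangle shapes and tends to $0$ on the degenerate ones, this value equals $c$ (and is attained, by the isosceles triangle with apex angle $2\arcsin\tfrac{\sqrt5-1}{2}$). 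Combined with the reduction step, this proves the proposition.

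\emph{Main obstacle.} The only genuinely delicate point is the obtuse case in the formula for $\delta(\D)$: one must track precisely when the foot of a perpendicular dropped from a point of one side leaves an adjacent side, and verify that in those ranges the distance to the union of the two sides never exceeds the value coming from the side opposite the obtuse angle. Everything else reduces to routine trigonometry and one elementary single-variable calculus computation.
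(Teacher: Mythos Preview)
Your overall strategy—deriving a closed formula for $\delta(\D)/\sz(\D)$ and optimizing it analytically—is genuinely different from the paper's, which instead reduces any triangle to an isosceles one by geometric comparison (sliding the apex, or cutting off an obtuse corner) in a way that does not increase the perimeter and does not decrease the relevant distance, and then maximizes only over isosceles triangles. Your reduction by scaling, your formula in the acute case, and the subsequent trigonometric optimization (fixing $\alpha$ and letting $u=\cos\frac{\beta-\gamma}{2}$, then optimizing in $s=\sin(\alpha/2)$) are all correct and lead to the right constant; the extremal triangle is acute, so this part already shows $c\ge\eta_0$.

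The gap is precisely where you flag it: the obtuse case. Your parenthetical justification is not correct. Take the side $b=[C,A]$ with endpoint angles $\gamma$ (acute) and $\alpha$ (obtuse). For \emph{every} $q\in[C,A]$ the foot of the perpendicular onto line $AB$ lies on the wrong side of $A$, so $d(q,[A,B])=d(q,A)=b-y$ (with $y=d(q,C)$), while $d(q,[C,B])=y\sin\gamma$. Hence
\[
\sup_{q\in b} d\bigl(q,[A,B]\cup[C,B]\bigr)=\frac{b\sin\gamma}{1+\sin\gamma},
\]
which \emph{exceeds} $b\sin\alpha\sin\gamma/(\sin\alpha+\sin\gamma)$ whenever $\sin\alpha<1$. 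So the supremum over a side with an obtuse endpoint is \emph{not} bounded above by your ``this expression''. What you actually need is that this quantity is still dominated by the value coming from the side opposite $\alpha$, i.e.
\[
\frac{b\sin\gamma}{1+\sin\gamma}\ \le\ \frac{a\sin\beta\sin\gamma}{\sin\beta+\sin\gamma}
\quad\Longleftrightarrow\quad
\sin\alpha\,(1+\sin\gamma)\ \ge\ \sin\beta+\sin\gamma.
\]
Your hinted inequality $\sin\alpha\ge 1+\cos\alpha$ does not yield this: for $\alpha=2\pi/3$, $\beta=\gamma=\pi/6$ one gets $(1+\cos\alpha)(1+\sin\gamma)=3/4<1=\sin\beta+\sin\gamma$, so the implication fails even though the target inequality itself holds there ($3\sqrt3/4>1$). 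The required inequality is true (equivalently, $\cos(\gamma/2)\cos\!\bigl((\beta+\gamma)/2\bigr)\ge\sin(\beta/2)$ for $\beta\ge\gamma$, $\beta+\gamma\le\pi/2$), but it needs an honest proof; alternatively you can sidestep it entirely by the paper's comparison argument for the obtuse case. Once that is supplied, your formula $\delta(\D)/\sz(\D)=F(\alpha,\beta,\gamma)$ is valid in all cases and your optimization finishes the proof.
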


From now on, we set $\eta_0=(\sqrt{5}-1)^{5/2}/2^{7/2}$. 
Since $\R^2$ is not hyperbolic, we have the following:

\begin{cor}\label{est:cor}
The following inequalities hold:
$$\frac{1}{32}\leq \varepsilon_H\leq\eta_0,\quad \frac{1}{32}\leq \varepsilon_H\leq\eta_0.$$
\end{cor}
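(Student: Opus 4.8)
The plan is to obtain Corollary~\ref{est:cor} as a purely formal consequence of Theorem~\ref{main:teo}, Theorem~\ref{main3:teo} and Proposition~\ref{eucli:prop}; essentially all that is needed is to unwind the definitions of $\varepsilon_H$ and $\varepsilon_T$. (We read the displayed inequality as $1/32\leq \varepsilon_H\leq \eta_0$ and $1/32\leq \varepsilon_T\leq \eta_0$.)

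For the lower bounds, recall that $\varepsilon_H$ is by definition the maximum of the set of $\varepsilon>0$ with the property that $\limsup_{t\to\infty}\Omega_X(t)/t<\varepsilon$ implies that $X$ is hyperbolic. The ``if'' direction of Theorem~\ref{main:teo} says precisely that $\varepsilon=1/32$ lies in this set, whence $\varepsilon_H\geq 1/32$. Similarly, using the identity $\sup_t \Omega_X(t)/t=\sup\{\delta(\D)/\sz(\D):\D\text{ triangle in }X\}$ recorded before the statement of Theorem~\ref{main3:teo}, the ``if'' direction of that theorem shows that $\varepsilon=1/32$ belongs to the set defining $\varepsilon_T$, so $\varepsilon_T\geq 1/32$.

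For the upper bounds, Proposition~\ref{eucli:prop} gives $\Omega_{\R^2}(t)=\eta_0\,t$ for every $t>0$, hence both $\limsup_{t\to\infty}\Omega_{\R^2}(t)/t$ and $\sup_t \Omega_{\R^2}(t)/t$ equal $\eta_0$. Since $\Omega_{\R^2}$ is unbounded, $\R^2$ is not hyperbolic, and since $\Omega_{\R^2}$ is not identically zero (a real tree being exactly a space with $\Omega_X\equiv 0$), $\R^2$ is not a real tree. We then argue by contradiction: if $\varepsilon_H>\eta_0$, then $\R^2$ satisfies $\limsup_{t\to\infty}\Omega_{\R^2}(t)/t=\eta_0<\varepsilon_H$, and by the defining property of $\varepsilon_H$ this forces $\R^2$ to be hyperbolic, a contradiction; hence $\varepsilon_H\leq\eta_0$. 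Replacing $\limsup$ by $\sup$ and ``hyperbolic'' by ``real tree'', the same reasoning gives $\varepsilon_T\leq\eta_0$.

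Thus the corollary is immediate once the three cited results are available; the genuine content lies in Proposition~\ref{eucli:prop}, established separately in Section~\ref{final:sec}, so there is no real obstacle at this stage. The one point to keep track of is the interplay between the strict inequalities cutting out the sets that define $\varepsilon_H$ and $\varepsilon_T$ and the fact that these constants are attained as maxima --- which is exactly what legitimizes the contradiction step from the hypotheses $\varepsilon_H>\eta_0$ and $\varepsilon_T>\eta_0$.
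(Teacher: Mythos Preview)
Your proposal is correct and matches the paper's approach: the corollary is stated immediately after Proposition~\ref{eucli:prop} with only the remark ``Since $\R^2$ is not hyperbolic, we have the following,'' so the paper too treats it as a formal consequence of Theorems~\ref{main:teo},~\ref{main3:teo} and Proposition~\ref{eucli:prop}. Your write-up is in fact more careful than the paper's, which leaves the unpacking of the definitions of $\varepsilon_H,\varepsilon_T$ entirely to the reader.
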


Our proof of Theorem~\ref{main:teo} was intended to give a somewhat 
significant estimate of $\varepsilon_H,\varepsilon_T$ (in fact,
similar but shorter arguments can be provided 
in order to show just that $\varepsilon_H>0$, $\varepsilon_T>0$ exist).
However, there are no reasons why $1/32$ should provide a good approximation  
of $\varepsilon_H$ and
$\varepsilon_T$. On the other hand, a recent result by Wenger~\cite{wenger}
on the sharp isoperimetric constant for hyperbolic spaces 
seems to suggest that the Euclidean plane could provide sharp bounds
on the behaviour of curves and triangles in hyperbolic spaces, so that
$\varepsilon_H$ (and $\varepsilon_T$, see Proposition~\ref{est2:prop}) could be
not too far from $\eta_0$.
Finally, it seems quite reasonable that
$\varepsilon_H=\varepsilon_T$, but at the moment we are just able to prove the following:

\begin{prop}\label{est2:prop}
$\varepsilon_H\leq\varepsilon_T$.
\end{prop}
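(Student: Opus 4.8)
The plan is to establish the equivalent statement: \emph{every geodesic space $X$ that is not a real tree satisfies $\sup_t\Omega_X(t)/t\geq\varepsilon_H$}. Granting this, its contrapositive says that $\sup_t\Omega_X(t)/t<\varepsilon_H$ implies $X$ is a real tree, so $\varepsilon_H$ lies in the set of constants defining $\varepsilon_T$, whence $\varepsilon_H\leq\varepsilon_T$. So let $X$ be a geodesic space that is not a real tree, fix a triangle $\D_0\subseteq X$ with $\delta(\D_0)=\delta_0>0$, and set $c=\sup_t\Omega_X(t)/t$, which is positive and finite (indeed $c\leq 1/4$, since $4\delta(\D)\leq\sz(\D)$ always). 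I will build from $X$ a geodesic space $X'$ that is \emph{not} hyperbolic, yet satisfies $\Omega_{X'}(t)\leq ct$ for every $t$. This suffices: since $X'$ is not hyperbolic and $\varepsilon_H$ is by definition the largest constant $\varepsilon$ for which $\limsup_{t\to\infty}\Omega_Y(t)/t<\varepsilon$ forces $Y$ to be hyperbolic, we must have $\varepsilon_H\leq\limsup_{t\to\infty}\Omega_{X'}(t)/t\leq\sup_t\Omega_{X'}(t)/t\leq c=\sup_t\Omega_X(t)/t$.

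The space $X'$ is a wedge of rescaled copies of $X$. Fix a basepoint $p\in X$; for $n\in\matN$ let $X_n$ denote the copy of $X$ equipped with the rescaled distance $n\cdot d$, and let $X'=\bigvee_{n\in\matN}X_n$ be the disjoint union of the $X_n$ with all the copies of $p$ identified to one point $*$, metrized so that $d_{X'}$ restricts to $d_{X_n}$ on each $X_n$ and $d_{X'}(x,y)=d_{X_n}(x,*)+d_{X_m}(*,y)$ whenever $x\in X_n$, $y\in X_m$ with $n\neq m$. Then $X'$ is a geodesic space, each $X_n$ is isometrically embedded, any geodesic between two points of a single $X_n$ stays in $X_n$, and any geodesic between points of distinct pieces is the concatenation of a geodesic ending at $*$ in one piece with a geodesic starting at $*$ in the other. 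Consequently the copy of $\D_0$ inside $X_n$ still has $\delta$-value $n\delta_0$ when computed in $X'$, so these triangles witness that $X'$ is not hyperbolic.

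It remains to prove $\delta(\D)\leq c\,\sz(\D)$ for every triangle $\D$ in $X'$. If the three vertices of $\D$ lie in a common $X_n$, then $\D\subseteq X_n$ and $\delta(\D)\leq\Omega_{X_n}(\sz(\D))=n\,\Omega_X(\sz(\D)/n)\leq c\,\sz(\D)$. Otherwise, cut $\D$ along its (at most three) passages through $*$: each side becomes a concatenation of at most two ``sub-geodesics'', each contained in a single piece, and these assemble into finitely many sub-triangles, each contained in a single piece $X_n$, each of perimeter at most $\sz(\D)$, and each with sides that are sub-arcs of the sides of $\D$ --- where one also allows degenerate sub-triangles of the form $\D(v,*,*)$, arising when the two sub-geodesics adjacent to a vertex $v$ of $\D$ both join $v$ to $*$ inside the same piece. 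Every point $x$ on a side of $\D$ lies on a side of one such sub-triangle $\D''\subseteq X_n$; applying the estimate of the previous case to $\D''$ (a genuine, if possibly degenerate, triangle in the sense of the paper, of perimeter at most $\sz(\D)$) gives $d\big(x,\ \text{the other two sides of }\D''\big)\leq\Omega_{X_n}(\sz(\D''))\leq c\,\sz(\D'')\leq c\,\sz(\D)$, and since the other two sides of $\D''$ lie inside the other two sides of $\D$, this yields $\delta(\D)\leq c\,\sz(\D)$. Hence $\Omega_{X'}(t)\leq ct$ for all $t$, as required.

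The main obstacle is this last step: one must check that cutting a geodesic triangle of $X'$ at $*$ genuinely produces sub-triangles whose perimeters do not exceed $\sz(\D)$ and whose sides follow the sides of $\D$, and that every point of $\D$ is captured by one of them --- the mildly delicate point being the degenerate ``bigon'' case, handled by applying the Rips condition to a triangle with two coincident vertices. It is precisely the fact that $\sz(\D)$ dominates the perimeter of each sub-triangle (rather than only a fixed multiple of $c$ appearing) that makes the sharp constant $c$ survive the argument; everything else is bookkeeping.
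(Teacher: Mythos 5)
Your argument is correct, and it reaches the conclusion by a genuinely different route from the paper's. Both proofs glue together rescaled copies of $X$, but the paper attaches the copies $X\times\{i\}$ (carrying the metric $i\cdot d$) to a line $\{p\}\times\R$, shows that the resulting space $Y$ still satisfies $\sup_t\Omega_Y(t)/t=\sup_t\Omega_X(t)/t<\varepsilon_H$, concludes that $Y$ is hyperbolic, and then passes to an asymptotic cone of $Y$ along the slices: by the implication $(1)\Rightarrow(2)$ of Proposition~\ref{main3:prop} that cone is a real tree, $X$ embeds isometrically into it, and hence $X$ is a real tree. You instead argue by contraposition, wedge the rescaled copies at a single point, and apply the definition of $\varepsilon_H$ directly to the resulting non-hyperbolic space; no asymptotic cones, no Lemma~\ref{tree:lemma}, no unique-geodesic considerations. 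The combinatorial core is of comparable difficulty in the two constructions: the paper must treat separately the case where the point realizing $\delta(\D)$ sits on the spine, while you must treat the degenerate bigons at the wedge point, and in both cases the decisive observation is that the sub-triangle extracted from a single copy has perimeter at most $\sz(\D)$ and its remaining sides sit inside the remaining sides of $\D$ (for your bigons this also uses that the wedge point lies on the third side of $\D$, which holds because that side joins two distinct copies --- a point worth stating explicitly, but true). Your version is the more elementary and self-contained of the two; what the paper's detour through asymptotic cones buys is essentially nothing beyond reusing machinery already developed in Section~\ref{cor:sec}.
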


The proof of Proposition~\ref{est2:prop} is independent from that of Theorem~\ref{main3:teo},
so we get Theorem~\ref{main3:teo} also as a corollary of Theorem~\ref{main:teo}
and Proposition~\ref{est2:prop}.

\section{The main argument}\label{main:sec}

This section is devoted to the proofs of Theorems~\ref{main:teo},~\ref{main3:teo}.
Let $X$ be a fixed geodesic space. 
In what follows, every time two points $x,y$ belong to a given geodesic 
$\ell$, we denote by $[x,y]$ the (unique) geodesic
joining $x$ to $y$ such that $[x,y]\subseteq \ell$. In that case,
we also suppose that the symbol $\D(x,y,z)$ denotes
a triangle $[x,y]\cup [y,z]\cup [z,x]$ such that $[x,y]\subseteq \ell$.
We begin with the following:

\begin{lemma}\label{lemma}
Let $\rho,\alpha>0$ and let $\Delta\subseteq X$ be a geodesic triangle with sides $l_1,l_2,l_3$ such that 
${\rm length}(l_1)\leq \alpha \rho$ and $\delta(\Delta)\leq \rho+1$. 
Then for each $p\in l_1$ we have 
$$d(p,l_2 \cup l_3)\leq  \Omega_X ((4\alpha+4)\rho +6)+ \Omega_X ((2\alpha+4)\rho+6).$$
\end{lemma}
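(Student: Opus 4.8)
The idea is to replace the long side $l_1$ by a bounded number of short geodesic segments, estimate the distance from $p$ to the other two sides via these segments, and control everything using the subadditive-type growth function $\Omega_X$. Fix $p\in l_1$, write $l_1=[x,y]$, and let $m$ be a point on $l_1$ chosen near the middle so that both subsegments $[x,m]$ and $[m,y]$ have length at most $\alpha\rho/2+1$ (a little slack is needed so that the new triangles we build have controlled perimeter; the precise splitting point will be dictated by where $p$ sits). The first step is to use the hypothesis $\delta(\Delta)\le\rho+1$: since $\Delta$ satisfies the Rips condition with a constant close to $\delta(\Delta)$, there is a point $q$ on $l_2\cup l_3$ with $d(m,q)\le\rho+2$. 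This $q$ is our ``anchor'': it is close to the interior point $m$ of $l_1$, and it lies on the union of the two sides we want to reach.

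Next I would form two auxiliary triangles: one with vertices $x$, $p$ (or $m$), $q$ and one with vertices $p$ (or $m$), $y$, $q$, choosing in each case a geodesic $[m,q]$ of length $\le\rho+2$, and using the portions of $l_1$ already present as two of the sides. The key point is to bound the perimeters. The side of $l_1$ used has length at most $\alpha\rho$ (in fact at most $\alpha\rho/2+1$ after splitting), the geodesic $[m,q]$ has length at most $\rho+2$, and the third side, a geodesic joining a point of $l_2\cup l_3$ to an endpoint of $l_1$, has length bounded by the triangle inequality in terms of $\alpha\rho$ and $\rho+2$ as well. Carefully adding these up, and being slightly generous to absorb the ``$+1$'' and ``$+2$'' slacks, the perimeter of the larger auxiliary triangle is at most $(4\alpha+4)\rho+6$ and that of the smaller one at most $(2\alpha+4)\rho+6$ — these are exactly the two arguments appearing in the statement. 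Then by definition of $\delta$ and $\Omega_X$, the point $p$ on its side of the auxiliary triangle is within $\Omega_X((4\alpha+4)\rho+6)$ (resp.\ $\Omega_X((2\alpha+4)\rho+6)$) of the union of the other two sides of that triangle; one of those two sides is the short geodesic $[m,q]$, whose points are all within $d(m,q)\le\rho+2$ of $q\in l_2\cup l_3$, and the other lies on $l_1$ itself — but a point on $l_1$ close to $p$ does not yet help us reach $l_2\cup l_3$, so one needs to iterate or, better, to choose the two auxiliary triangles so that between them every point of $[m,q]$ is ``covered'' and the $l_1$-sides shrink. Tracking which of the two bounds applies to $p$ depending on whether $p\in[x,m]$ or $p\in[m,y]$, and summing the at-most-two contributions $\Omega_X((4\alpha+4)\rho+6)$ and $\Omega_X((2\alpha+4)\rho+6)$ (plus the negligible $\rho+2$, which is absorbed because $\Omega_X$ is nondecreasing and $\rho+2\le\Omega_X(\text{large})$ once $\rho$ is not tiny, or handled directly), gives the claimed estimate
\[
d(p,l_2\cup l_3)\le \Omega_X((4\alpha+4)\rho+6)+\Omega_X((2\alpha+4)\rho+6).
\]

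The main obstacle is bookkeeping rather than conceptual: one must choose the splitting point $m$ and the auxiliary triangles so that (a) both new triangles genuinely have their perimeters bounded by the two stated quantities, with enough slack to swallow the additive constants coming from $\delta(\Delta)\le\rho+1$ (which only gives the Rips constant up to an extra $1$) and from the triangle inequality estimates on the new sides, and (b) the two $\Omega_X$-terms produced really do reach $l_2\cup l_3$ and not merely wander back onto $l_1$. The cleanest way to guarantee (b) is to do the construction in two symmetric halves so that the ``$l_1$-side'' of each auxiliary triangle is half as long, and to note that a point forced onto that shorter $l_1$-side can be pushed across to $[m,q]$ by the same Rips estimate one more time — but since the shorter side has length $\le\alpha\rho/2$, its own $\delta$ is $\le\alpha\rho/8$, and a short sub-argument (or a direct geometric observation that the side of the smaller triangle lying on $l_1$ has an endpoint equal to $p$ or can be arranged to, so that ``reaching it'' is vacuous for $p$) closes the gap. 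I would set up the indices so that the point $p$ is always an endpoint of the relevant $l_1$-side of one of the two auxiliary triangles, which makes the ``other two sides'' of that triangle be $[m,q]$ together with a geodesic from $q$ to the far endpoint of $l_1$; then $d(p,l_2\cup l_3)$ is bounded by $\Omega_X$ of that triangle's perimeter plus $d(m,q)\le\rho+2$, and the two cases $p\in[x,m]$, $p\in[m,y]$ yield the two summands after absorbing $\rho+2$ into the larger $\Omega_X$ term.
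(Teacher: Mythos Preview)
Your approach has a genuine gap. The step where you ``absorb $\rho+2$ into the larger $\Omega_X$ term'' cannot work: the lemma is stated for an arbitrary geodesic space, and in particular must hold when $X$ is $\delta$-hyperbolic for some fixed $\delta$. In that case $\Omega_X(t)\le\delta$ for all $t$, while $\rho$ is an arbitrary positive parameter, so $\rho+2$ certainly need not be bounded by any value of $\Omega_X$. The same objection applies to the claim that ``$\rho+2\le\Omega_X(\text{large})$ once $\rho$ is not tiny.'' Your construction, as described, unavoidably produces an additive $\rho$ term that cannot be eliminated this way.

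The paper's argument avoids this by anchoring differently. Rather than choosing a midpoint $m$ on $l_1$ and projecting it to $l_2\cup l_3$, the paper picks a point $q$ on $l_2$ at the carefully chosen distance $(\alpha+1)\rho+2$ from the vertex $a_3$; since this exceeds $\mathrm{length}(l_1)+\rho+1$, the Rips condition forces the nearest point $r$ to $q$ on $l_1\cup l_3$ to lie on $l_3$. Now \emph{both} $q$ and $r$ lie on the target set $l_2\cup l_3$. The two auxiliary triangles are $\Delta_1=l_1\cup[a_3,r]\cup[r,a_2]$ (with $[r,a_2]\subseteq l_3$) and $\Delta_2=[a_3,q]\cup[q,r]\cup[r,a_3]$ (with $[a_3,q]\subseteq l_2$), and their perimeters give exactly the two arguments $(4\alpha+4)\rho+6$ and $(2\alpha+4)\rho+6$. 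Chasing $p$ through $\Delta_1$ then $\Delta_2$ lands either on $l_2\cup l_3$ directly, or on the short segment $[q,r]$; in the latter case a distance count using the specific value $d(a_3,q)=(\alpha+1)\rho+2$ forces $d(p,a_2)\le\Omega_X((4\alpha+4)\rho+6)$, with no leftover $\rho$ term to absorb.
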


\begin{proof}
If ${\rm length} (l_i)\leq (\alpha +1)\rho +2$ for $i=2,3$, 
then $\sz (\D)\leq (3\alpha +2) \rho +4$, whence the conclusion
since $\Omega_X$ is an increasing function. So, 
if $a_i\in\D$ is the vertex opposite to the side $l_i$, up to
exchanging $l_2$ with $l_3$ we can 
take $q\in l_2$ such that $d(a_3,q)=(\alpha+1)\rho +2$.
Since ${\rm length} (l_1)\leq \alpha \rho$ we get 
$d(q,l_1)\geq \rho +2$,  
so $\delta (\D)\leq \rho +1$ implies that $r\in l_3$ exists such that 
$d(q,r)\leq \rho +1$. 
Since 
$$
\begin{array}{lllll}
d(a_3,r)&\leq& d(a_3,q)+d(q,r) &\leq & (\alpha+2)\rho +3,\\
d(a_2,r)&\leq& d(a_2,a_3)+d(a_3,r) &\leq& 2(\alpha+1)\rho +3,
\end{array}
$$ 
setting $\D_1 =l_1 \cup [a_3,r]\cup [r,a_2]$,
$\D_2=[a_3,q]\cup [q,r]\cup [r,a_3]$ 
we get
\begin{equation}\label{first:eq}
\sz(\D_1)\leq (4\alpha+4)\rho+6,\quad
\sz(\D_2)\leq (2\alpha+4)\rho+6.
\end{equation} 
Let now $p$ be any point of $l_1$, and consider the triangle 
$\D_1$. By~(\ref{first:eq}) there exists
$s\in [a_3,r]\cup [r,a_2]$ such that $d(p,s)\leq \delta(\D_1)\leq \Omega_X ((4\alpha+4)\rho +6)$.
If $s$ belongs to $[r,a_2]$, we are done.
Otherwise $s$ belongs to $[a_3,r]$, so
a point $t\in [a_3,q]\cup [q,r]$ exists such that 
$d(s,t)\leq \delta (\D_2)\leq \Omega_X ((2\alpha+4)\rho+6)$.
Thus $d(p,t)\leq \Omega_X ((4\alpha+4)\rho +6)+ \Omega_X ((2\alpha+4)\rho+6)$,
and if $t\in [a_3,q]$ we are done. Otherwise,
we have $t\in [q,r]$, so $d(p,q)\leq d(p,t)+d(t,q)\leq \Omega_X ((4\alpha+4)\rho +6)+\rho+1$.
Thus
$$
(\alpha+1)\rho+1=d(a_3,q)\leq d(a_3,p)+d(p,q)\leq d(a_3,p)+\Omega_X ((4\alpha+4)\rho +6)+\rho+1,
$$
whence $d(a_3,p)\geq \alpha\rho-\Omega_X ((4\alpha+4)\rho +6)$. This readily implies
$d(p,a_2)\leq \Omega_X ((4\alpha+4)\rho +6)$, whence $d(p,l_2\cup l_3)\leq 
\Omega_X ((4\alpha+4)\rho +6)$, and the conclusion at once.
\end{proof}

\begin{figure}
\begin{center}
\input{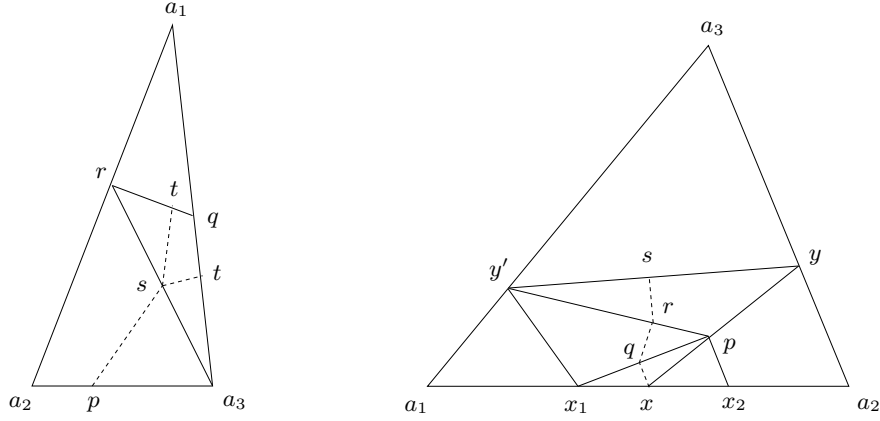} 
\caption{Notations for the proofs of Lemma~\ref{lemma} and Theorem~\ref{main:teo}.}
\label{prima:fig}
\end{center}
\end{figure}

\vspace{1pt}\noindent{\it Proof of Theorem~\ref{main:teo}.}
By contradiction, suppose $\Omega_X$ diverges. We set
$$\beta=\frac{1}{32} -\limsup_{t\to\infty} \frac{\Omega_X (t)}{t} >0,
\quad \kappa=\frac{1}{32}-\frac{\beta}{2}.$$
Let $\mu>0$ be large enough so that $\Omega_X (\mu)>(1/\beta)+1$ and $\Omega_X (l)\leq \kappa l$ for every 
$l\geq \Omega_X (\mu)-1$. 
Let $\Delta=\D(a_1,a_2,a_3)$ 
be a geodesic triangle with $\sz (\D)\leq \mu$ and $\lambda=\delta(\Delta)\geq \Omega_X (\mu)-1$. 
Up to reordering the vertices of $\Delta$, we may suppose there exist
$x\in [a_1,a_2]$ and $y\in [a_2,a_3]$ such that $d(x,[a_2,a_3]\cup [a_3,a_1])=d(x,y)=\lambda$. 
For $i=1,2$, let $x_i$ be the point on $[x,a_i]$ such that
$d(x,x_i)=\lambda/3$,
and let $p\in [x,y]$ be the point such that $d(x,p)=\lambda/3$.

Since $\sz(\D(x_1,x_2,p))\leq 2\lambda$, a point 
$q\in [x_1,p]\cup [x_2,p]$ exists such that $d(x,q)\leq
\Omega_X (2\lambda)\leq 2\kappa \lambda$. 
Without
loss of generality, we may suppose $q\in [x_1,p]$ 
(the following proof working exactly in the same way
also in the case $q\in [x_2,p]$).

Let $y'\in [a_1,a_3]\cup[a_2,a_3]$ be such that
$d(x_1,y')\leq \lambda=\delta (\D)$.
Since $\sz(\D (p,x_1,y'))\leq 2(d(p,x_1)+d(x_1,y'))\leq
(10/3)\lambda$, 
a point $r\in [x_1,y']\cup [p,y']$ exists such that
$d(q,r)\leq (10/3)\kappa\lambda$. Thus
\begin{equation}\label{stima1}
d(x,r)\leq d(x,q)+d(q,r)\leq \frac{16\kappa\lambda}{3}. 
\end{equation} 

Suppose $r\in [x_1,y']$. 
Then
$d(x_1, r)\geq d(x_1,x)-d(x,r)\geq ((1-16\kappa)/3)\lambda$.
On the other hand, since $d(x,[a_1,a_3]\cup [a_2,a_3])=\lambda$
and $\kappa <1/32$ we have
$$
\begin{array}{ccccccl}
\lambda & \leq & d(x,y')&\leq & d(x,r)+d(r,y')&=& d(x,r)+d(x_1,y')- d(x_1,r)\\
 & \leq &
(\frac{16\kappa}{3}+1-\frac{1-16\kappa}{3})\lambda & = & \frac{2+32\kappa}{3}\lambda & < &\lambda,
\end{array}
$$
a contradiction.
Thus $r\in [p,y']$.

Now $d(y,p)+d(p,y')\leq d(y,x)+d(x,x_1)+d(x_1,y')\leq (7/3)\lambda$,
so $\sz(\D(p,y',y))\leq (14/3)\lambda$, and a point
$s\in [y,y']\cup [p,y]$ exists such that 
$d(r,s)\leq (14/3)\kappa\lambda$.
By~(\ref{stima1}), 
it follows that
\begin{equation}\label{stima2}
d(x,s)\leq d(x,r)+d(r,s)\leq 10\kappa\lambda.
\end{equation}
Since $(10/32)\lambda< (1/3)\lambda = d(x,p)$, this implies
$s\in [y,y']$. Observe also that $y'\in [a_1,a_3]$,
because otherwise we would have $[y,y']\subseteq [a_2,a_3]$, and
$d(x, [a_2,a_3])\leq d(x,s)<\lambda$, a contradiction.

Consider now the triangle $\D(y,y',a_3)$. Of course $\sz (\D(y,y',a_3))\leq
\sz (\D)$, so 
$\delta (\D(y,y',a_3))\leq \Omega_X (\mu)\leq \lambda +1$. Since
$d(y,y')\leq d(y,x)+d(x,x_1)+d(x_1,y')\leq (7/3)\lambda$,
by Lemma~\ref{lemma}
we obtain
$$
\begin{array}{llll}
& d(s, [a_1,a_3]\cup [a_2,a_3]) & \leq & d(s,[y',a_3]\cup [y,a_3])\\
\leq & \Omega_X ((40/3)\lambda + 6)+\Omega_X ((26/3)\lambda +6) & \leq &
\kappa (22\lambda + 12).
\end{array}
$$
By~(\ref{stima2}), since $\kappa=1/32 -\beta/2$ and $\lambda>1/\beta$
we finally get $$d(x,[a_1,a_3]\cup [a_2,a_3])\leq d(x,s)+d(s, [a_1,a_3]\cup [a_2,a_3])
<32\kappa\lambda +12\kappa <\lambda,$$
a contradiction.
\finedimo

\vspace{1pt}\noindent{\it Proof of Theorem~\ref{main3:teo}.}
Let $X$ be a geodesic space such that
$\sup_{t} \Omega_X (t)/t<1/32$ and suppose by contradiction
that there exists $\mu>0$ with $\Omega_X (\mu)>0$. As in the proof
of Theorem~\ref{main:teo}, set 
$$ 
\beta=\frac{1}{32} -\sup_{t} \frac{\Omega_X (t)}{t} >0,
\quad \kappa=\frac{1}{32}-\frac{\beta}{2}.$$
Observe that a rescaling of the metric of $X$ does not affect
the hypothesis and the thesis of the theorem, so
we can assume
$\Omega_X (\mu)>(1/\beta)+1$.
Then a triangle
$\Delta\subseteq X$ exists such that 
$\sz (\D)\leq \mu$ and $\lambda=\delta(\Delta)\geq \Omega_X (\mu)-1$,
and the very same argument of the proof of Theorem~\ref{main:teo}
leads to a contradiction.

\section{Characterizing hyperbolic spaces}\label{cor:sec}

This section is devoted to the proof of the following result, which will
in turn imply Theorem~\ref{main2:teo}.

\begin{prop}\label{main3:prop}
Let $(X,d)$ be a geodesic space.
The following facts are equivalent:
\begin{enumerate}
\item
$X$ is hyperbolic;
\item
for any choice of a ultrafilter $\omega$, a sequence of basepoints $(x_n)\subseteq X$ and 
a sequence of rescaling factors $(d_n)\subseteq \R$,
the asymptotic cone $X_\omega ((x_n),(d_n))$ is a real tree;
\item
$\liminf_{t\to\infty} G_X(t)/t> 30$;
\item
$\limsup_{t\to\infty} \Omega_X(t)/t< 1/32$.
\end{enumerate}
\end{prop}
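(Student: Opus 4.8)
The plan is to prove the equivalence $(1)\Leftrightarrow(2)\Leftrightarrow(3)\Leftrightarrow(4)$ by establishing the cycle of implications that requires the least new work, using Theorem~\ref{main:teo} as the engine. The equivalence $(1)\Leftrightarrow(4)$ is \emph{exactly} Theorem~\ref{main:teo}, so nothing remains to be done there. This suggests organizing the remaining argument so that everything is reduced to statement $(4)$ (or its failure). Concretely I would prove $(1)\Rightarrow(2)$, $(2)\Rightarrow(4)$, $(1)\Rightarrow(3)$ and $(3)\Rightarrow(4)$; together with Theorem~\ref{main:teo} (i.e.\ $(4)\Leftrightarrow(1)$), this closes all the loops.

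First I would dispose of the implications that do \emph{not} use asymptotic cones or detours in an essential way. For $(1)\Rightarrow(3)$: if $X$ is $\delta$-hyperbolic, then a classical estimate (which I would reprove in a line) shows that any $t$-detour $\gamma$ must have length at least exponential in $t/\delta$, in particular $\mathrm{length}(\gamma)/t\to\infty$, so $\liminf G_X(t)/t=\infty>30$. Actually Bonk's Theorem~\ref{bonk:teo} already gives $\lim G_X(t)/t=+\infty$ for hyperbolic $X$, so $(1)\Rightarrow(3)$ is immediate from the cited result. For the converse direction within this pair, $(3)\Rightarrow(4)$, I would argue contrapositively: if $(4)$ fails, then $\limsup\Omega_X(t)/t\ge 1/32$, so there are triangles of perimeter $\to\infty$ with $\delta(\Delta)$ of order at least $\sz(\Delta)/32$. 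Given such a fat triangle $\Delta(a_1,a_2,a_3)$ with a point $x$ on one side at distance $\lambda\approx\sz(\Delta)/32$ from the union of the other two sides, I would build a rectifiable $t$-detour with $t$ comparable to $\lambda$ and length bounded by a fixed multiple of $\lambda$: take $\gamma$ to run along the two sides of $\Delta$ not containing $x$ from one endpoint of $[x,a_?]$ to the other, avoiding a ball of radius $\sim\lambda$ around $x$; its length is at most $\sz(\Delta)-d(x,a_i)-d(x,a_j)$-ish, i.e.\ $O(\lambda)$, while it is a $ct$-detour for $t\sim\lambda$. Tuning the constants so that length/$t\le 30$ contradicts $(3)$. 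The bookkeeping here — getting the ratio below $30$ rather than merely bounded — is the fiddly part, and I expect this to be the main obstacle: one must be careful that the portion of the boundary one keeps really does stay far from the chosen point and that its length does not exceed $30$ times the detour radius.

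Next, the asymptotic-cone implications. For $(1)\Rightarrow(2)$: if $X$ is $\delta$-hyperbolic, I would show any asymptotic cone $X_\omega$ is $0$-hyperbolic directly. An asymptotic cone of a geodesic space is geodesic (a standard diagonal/ultralimit argument on geodesics), and a triangle in $X_\omega$ with vertices $[(a_n^i)]$ is an ultralimit of triangles $\Delta_n$ in $X$, each $\delta$-thin; after rescaling by $d_n\to\infty$ the thinness constant $\delta/d_n\to 0$, so the limit triangle is $0$-thin, i.e.\ $X_\omega$ is a real tree. For $(2)\Rightarrow(4)$: again contrapositively, if $(4)$ fails, pick triangles $\Delta_n$ with $\sz(\Delta_n)=:s_n\to\infty$ and $\delta(\Delta_n)\ge s_n/32 - o(s_n)$ (more precisely $\delta(\Delta_n)/s_n\to c\ge 1/32$ along $\omega$, using that $\limsup=\sup$ for the rescaled quantity, or passing to a subsequence). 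Set $d_n=s_n$ and let $x_n$ be a point on $\Delta_n$ realizing the $\delta$-thinness defect. In the cone $X_\omega((x_n),(d_n))$ the rescaled triangles converge to a genuine geodesic triangle which is \emph{not} $0$-thin — the limit of the point $x_n$ stays at distance $\ge c>0$ from the union of the two opposite sides — so $X_\omega$ is not a real tree, contradicting $(2)$. One subtlety to handle: I must check the limiting triangle's sides are honest geodesics in the cone and that the "opposite sides" in the limit really are the ultralimits of the opposite sides (no collapse), which follows because the relevant distances are bounded below after rescaling.

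Finally I would assemble the cycle: $(1)\Leftrightarrow(4)$ is Theorem~\ref{main:teo}; $(1)\Rightarrow(2)\Rightarrow(4)$ and $(1)\Rightarrow(3)\Rightarrow(4)$ close the remaining two statements into the first. Hence all four are equivalent. I would then remark that Theorem~\ref{main2:teo} is the implication $(1)\Leftrightarrow(3)$ of this proposition, so it follows at once. The principal difficulty, as noted, is the quantitative detour construction in $(3)\Rightarrow(4)$: producing, from a fat triangle of perimeter $s$, a rectifiable detour of radius $\gtrsim s/32$ and length $\le 30$ times that radius, which forces the constant $30$ to be compatible with the constant $1/32$ from Theorem~\ref{main:teo}; everything else is routine ultralimit and $\delta$-hyperbolicity bookkeeping.
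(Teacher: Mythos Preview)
Your proposal is correct in substance, but the organization differs from the paper's. The paper proves a single cycle $(1)\Rightarrow(2)\Rightarrow(3)\Rightarrow(4)\Rightarrow(1)$, whereas you run two separate loops through $(4)$: one via $(2)$ and one via $(3)$. The substantive difference is in how $(2)$ is linked to the rest. The paper proves $(2)\Rightarrow(3)$ using Lemma~\ref{tree:lemma} (in a real tree any continuous path from $x$ to $y$ contains $[x,y]$): assuming a bounded sequence $G_X(t_n)/t_n<M$, the corresponding $t_n$-detours ultralimit to a continuous path in the cone missing a point of the limiting geodesic, contradicting the tree property. You instead prove $(2)\Rightarrow(4)$ by sending fat triangles to the cone to produce a non-$0$-thin triangle there. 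Both routes work; the paper's has the bonus of yielding the stronger conclusion $\lim_{t\to\infty} G_X(t)/t=+\infty$ under $(2)$, and it avoids invoking Bonk's theorem for $(1)\Rightarrow(3)$, deriving that implication as part of the cycle instead.

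Two remarks on your sketch. First, in $(1)\Rightarrow(2)$ you assert that a triangle in $X_\omega$ ``is an ultralimit of triangles $\Delta_n$ in $X$'': this is not automatic, since an arbitrary geodesic in the cone need not arise as an ultralimit of geodesics. The paper handles this by first proving that $X_\omega$ is \emph{uniquely} geodesic (using $\delta$-hyperbolicity of $X$), so that every geodesic coincides with the canonical ultralimit one; you should insert this step. Second, you flag $(3)\Rightarrow(4)$ as the ``main obstacle'', but it is actually the most mechanical step and the constants fall out exactly: the two sides of $\Delta$ not containing $p$ form a $\delta(\Delta)$-detour of length at most $\sz(\Delta)-2\delta(\Delta)$ (the side through $p$ has length $\ge 2\delta(\Delta)$ since both its endpoints lie on the other two sides), giving $G_X(\delta(\Delta))\le \sz(\Delta)-2\delta(\Delta)$, and $30=32-2$ drops out immediately.
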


We show first an easy (and well-known) result
which will be needed in the proof of Proposition~\ref{main3:prop}:

\begin{lemma}\label{tree:lemma}
Suppose $(X,d)$ is a real tree and let $\gamma\colon [0,1]\to X$
be a continuous path with $\gamma(0)=x$, $\gamma (1)=y$. Then $[x,y]\subseteq {\rm Im}\, \gamma$.
\end{lemma}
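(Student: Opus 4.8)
\textit{Proof plan.}
The statement to prove is Lemma~\ref{tree:lemma}: in a real tree $(X,d)$, the image of any continuous path from $x$ to $y$ contains the geodesic $[x,y]$. The plan is to argue by contradiction: suppose some point $w\in[x,y]$ is not in $\mathrm{Im}\,\gamma$. Since $X$ is a real tree it is uniquely geodesic, and there is a well-defined nearest-point projection $\pi\colon X\to[x,y]$, which in a real tree sends a point $u$ to the unique point where the geodesic $[u,x]$ first meets $[x,y]$ (equivalently, the point $v$ such that $[u,v]\cap[x,y]=\{v\}$, using the concatenation property of geodesics in a real tree recalled in Section~\ref{construction:section}).

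First I would establish that this projection $\pi$ is continuous — in fact $1$-Lipschitz. This follows from the standard fact that in a real tree (indeed in any $\mathrm{CAT}(0)$ space, but here it is elementary) the nearest-point projection onto a convex set, and a geodesic segment is convex, is $1$-Lipschitz; concretely, for $u,u'\in X$ one checks using the tripod/concatenation structure that $d(\pi(u),\pi(u'))\le d(u,u')$. Once $\pi$ is continuous, the composition $\pi\circ\gamma\colon[0,1]\to[x,y]$ is a continuous path in the segment $[x,y]$ with $\pi(\gamma(0))=\pi(x)=x$ and $\pi(\gamma(1))=\pi(y)=y$. Identifying $[x,y]$ isometrically with a real interval $[0,L]$ where $L=d(x,y)$, the map $\pi\circ\gamma$ is a continuous real-valued function taking the values $0$ and $L$, so by the intermediate value theorem it attains every value in between; in particular it attains the value corresponding to $w$. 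Hence there is $t_0$ with $\pi(\gamma(t_0))=w$. But $w\in[x,y]$, so $\pi(w)=w$, and a point of the segment projecting to $w$ must... — more carefully, $\pi(\gamma(t_0))=w$ only tells us $w$ is on the segment between $\gamma(t_0)$ and its projection, not that $\gamma(t_0)=w$.

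So the contradiction needs a little more care, and this is the step I expect to be the main (minor) obstacle: from $\pi(\gamma(t_0))=w$ I cannot directly conclude $w\in\mathrm{Im}\,\gamma$. The fix: instead of projecting, observe that if $w\notin\mathrm{Im}\,\gamma$ then $\mathrm{Im}\,\gamma\subseteq X\setminus\{w\}$, and in a real tree removing a point that lies in the interior of $[x,y]$ disconnects $X$ into (at least) two components, one containing $x$ and one containing $y$ — because any path from $x$ to $y$ in a real tree must pass through $w$, which is precisely what unique geodesics plus the concatenation property give: $[x,y]$ is the unique arc from $x$ to $y$. Concretely, let $U=\{u\in X: w\notin[x,u]\}\cup\{x\text{-side points}\}$; one shows $X\setminus\{w\}=A\sqcup B$ with $A,B$ open, $x\in A$, $y\in B$, by using $\pi$: set $A=\pi^{-1}([x,w))$ and $B=\pi^{-1}((w,y])$ where $[x,w),(w,y]$ are the two half-open subsegments; these are open in $X\setminus\{w\}$ by continuity of $\pi$, they are disjoint, and they cover $X\setminus\{w\}$ because $\pi(u)=w$ forces $w\in[u,x]$ hence (if also $w\notin\mathrm{Im}\,\gamma$ this $u$ is irrelevant) — cleanest is: if $\pi(u)=w$ then $u\ne w$ implies $u$ lies strictly on one side via the geodesic $[u,w]$ attached at $w$, but for the disconnection argument we only need $A,B$ as above restricted away from the fiber over $w$, and points with $\pi(u)=w$, $u\ne w$ can be put in $A$ or $B$ according to nothing — they genuinely form a third part. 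The correct clean statement: $X\setminus\{w\}$ has the property that $x$ and $y$ lie in different path components, since any path joining them contains $w$ (its image contains $[x,y]\ni w$ by the very lemma for the sub-case, or more basically because in a real tree the image of a path joining two points contains the unique arc between them — which can be proved first for arcs and then deduced). Given that $\gamma$ is a path from $x$ to $y$ avoiding $w$, this is the desired contradiction, completing the proof. I would present it via the $\pi\circ\gamma$ intermediate-value argument phrased to avoid the fiber issue: since $\pi\circ\gamma$ is continuous with values $0$ and $L$ at the endpoints, its image is all of $[0,L]$, so in particular some $\gamma(t_0)$ projects to $w$; then use that $[\gamma(t_0),w]$ has length $0$ — which we can force by instead choosing $w$ to be a point of $\mathrm{Im}\,\gamma\cap[x,y]$-complement that is isolated appropriately, or simply note the set $\mathrm{Im}\,\gamma\cap[x,y]$ is closed (compact), contains $x,y$, and equals $\pi(\mathrm{Im}\,\gamma)\cap\mathrm{Im}\,\gamma$... — at this point the honest route is the connectedness one, so that is what I would write up.
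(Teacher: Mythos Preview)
Your plan has the right setup --- the nearest-point projection $\pi\colon X\to [x,y]$ and the composition $\pi\circ\gamma$ --- but it never closes. You correctly note that $\pi(\gamma(t_0))=w$ does not give $\gamma(t_0)=w$, and your fallback disconnection argument is circular as written: to show that $x$ and $y$ lie in different path components of $X\setminus\{w\}$ you invoke ``any path joining them contains $w$'', which is the lemma itself. Your attempt to partition $X\setminus\{w\}$ into $\pi^{-1}([x,w))$ and $\pi^{-1}((w,y])$ fails exactly where you say it does: the fiber $\pi^{-1}(w)\setminus\{w\}$ is a genuine third piece, and you give no way to absorb it.

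The missing ingredient, and the one the paper uses, is stronger than the $1$-Lipschitz property you cite: $\pi$ is \emph{locally constant} on $X\setminus [x,y]$. Once you prove this (for $z\notin[x,y]$ with $d(z,[x,y])=k>0$, any $z'$ with $d(z,z')<k/2$ has the same projection), the argument finishes cleanly along the lines you started: $\pi\circ\gamma$ is continuous with connected image containing $x$ and $y$, hence surjective onto $[x,y]$; if $s\in [x,y]\setminus\mathrm{Im}\,\gamma$, then $(\pi\circ\gamma)^{-1}(s)$ is nonempty, closed, and --- because every point of it is mapped by $\gamma$ into $X\setminus[x,y]$ where $\pi$ is locally constant --- also open, hence all of $[0,1]$, contradicting $\pi(\gamma(0))=x\neq s$. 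This clopen-preimage step is exactly what resolves your ``fiber over $w$'' obstruction without any disconnection detour.
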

\begin{proof}
Let $z\in X\setminus [x,y]$ and observe that since $[x,y]$ is compact a point $t\in [x,y]$ 
exists such that
$d(z,t)=d(z,[x,y])=k>0$. We claim that if $d(z',z)<k/2$ and 
$d(z',t')=d(z',[x,y])$, then $t=t'$. 
In fact, of course $[z',t']\cap [x,y]=\{t'\}$, 
so $[z',t]=[z',t']\cup [t',t]$. But $X$ being $0$-hyperbolic,
this implies $t'\in [t,z]\cup [z,z']$. Since $d(z',t')\geq d(z,t')- d(z,z')> k-k/2=k/2$,
we cannot have $t'\in [z,z']$, so $t' \in [t,z]$, whence $t'=t$ since
$[t,z]\cap [x,y]=\{t\}$, and the claim is proved.
This readily implies that the map $\pi\colon X\to [x,y]$
which sends $p\in X$ to its closest point  $\pi (p)\in [x,y]$ is well-defined, continuous and locally
constant on $X\setminus [x,y]$.

Being connected and containing $x,y$, the set
${\rm Im}\, (\pi\circ\gamma)\subseteq [x,y]$ equals in fact $[x,y]$.
So, suppose there exists $s\in [x,y]\setminus {\rm Im}\,\gamma$, and observe
that of course $s\neq x$. 
Then $(\pi\circ\gamma)^{-1} (s)\subseteq [0,1]$ is non-empty, closed and
open (because $\pi$ is locally constant on $X\setminus [x,y]$),
whence equal to $[0,1]$,
a contradiction since $\pi (\gamma (0))=x\neq s$.
\end{proof}

\begin{proof}
$(1)\Rightarrow (2).$
This implication is well-known, we sketch a proof of it
for the sake of completeness.
Suppose $(X,d)$ is $\delta$-hyperbolic. Then $(X,d/d_n)$ is obviously 
$(\delta/d_n)$-hyperbolic.

We first show that
$X_\omega:=((X_\omega, (x_n),(d_n)),d_\omega)$ is uniquely geodesic. So, let $[(y_n)],[(z_n)]\in X_\omega$,
and let $\gamma_n\colon [0,1]\to X$ be a geodesic joining $y_n$ to $z_n$ for every
$n\in\matN$. It is easily seen that the map $\gamma_\omega\colon [0,1]\to X_\omega$
defined by $\gamma (t)=[(\gamma_n (t))]$ is a geodesic.
Let $\psi\colon [0,1]\to X_\omega$ be 
a geodesic with the same endpoints as $\gamma$ and take $t_0\in [0,1]$.
If $\psi(t_0)=[(p_n)]$, let us consider a triangle $\D_n=[y_n,p_n]\cup [p_n,z_n]\cup
{\rm Im}\, \gamma_n \subseteq X$:
by $\delta$-hyperbolicity of $X$, a point
$q_n\in [y_n,p_n]\cup [p_n,z_n]$ exists such that $d(\gamma_n (t_0),q_n)\leq \delta$.
Of course, this implies $[(q_n)]=\gamma_\omega (t_0)$. In particular,
we have $d_\omega ([(q_n)],[(y_n)])=d_\omega (\gamma_\omega (t_0), [(y_n)])=d_\omega (\psi (t_0),[(y_n)])$
and $d_\omega ([(q_n)],[(z_n)])=d_\omega (\gamma_\omega (t_0), [(z_n)])=d_\omega (\psi (t_0),[(z_n)])$.
Since $q_n\in [y_n,p_n]\cup [p_n,z_n]$, this easily implies that $[(q_n)]=[(p_n)]$,
whence $\psi (t_0)=\gamma_\omega (t_0)$, and $\psi=\gamma_\omega$.

Let now $\D_\omega=[x^1_\omega,x^2_\omega]\cup [x^2_\omega,x^3_\omega]\cup [x^3_\omega,x^1_\omega]\subseteq
X_\omega$ be a geodesic triangle. We have just proved that, $X_\omega$ being uniquely geodesic,
$\D_\omega$ is in an obvious sense the $\omega$-limit of triangles $\D_n=[x^1_n,x^2_n]\cup
[x^2_n,x^3_n]\cup [x^3_n,x^1_n]$ such that $x^i_\omega=[(x^i_n)]$. With respect to the
rescaled metric $d/d_n$, these triangles satisfy
the Rips condition with constant $\delta/d_n$. Since 
$\lim_{n\to\infty} \delta/d_n =0$, this readily implies that $\D_\omega$ satisfies
the Rips condition with constant $0$, whence the conclusion.

\vspace{.3cm}\par

$(2)\Rightarrow (3).$
Arguing by contradiction, we will prove the stronger fact that, if $(2)$ holds, 
then $\lim_{t\to\infty} G_X (t)/t=
+\infty$. So, suppose there exist a constant $M>0$ and a diverging sequence $(t_n)\subseteq \R^+$ 
such that $G(t_n)/t_n < M$ for every $n\in\matN$. By the very definition of $G_X$,
for every $n\in\matN$ 
there exist points $x_n,y_n\in X$,
a path $\gamma_n \colon [0,1]\to X$ with $\gamma_n (0)=x_n$, $\gamma_n (1)=y_n$ and 
${\rm lenght}(\gamma_n)\leq M t_n$,
a geodesic $[x_n,y_n]$ and a point $z_n\in [x_n,y_n]$ such that $d(z_n,{\rm Im}\, \gamma_n) \geq t_n$.
Let now $\omega$ be any non-principal ultrafilter, and consider the asymptotic cone
$X_\omega:=(X_\omega ((x_n),(t_n)),d_\omega)$.

Since $d(x_n,y_n)\leq {\rm lenght}(\gamma_n)\leq M t_n$, as in the proof of $(1)\Rightarrow (2)$
one can prove that the $\omega$-limit of the geodesics $[x_n,y_n]$
defines a geodesic in $X_\omega$ joining $x_\omega :=[(x_n)]$ and $y_\omega :=[(y_n)]$. 
We denote
such a geodesic by $[x_\omega,y_\omega]$, and observe that $[(z_n)]\in 
[x_\omega,y_\omega]$. Without loss of generality, we may suppose
$\gamma_n$ is parameterized at constant speed. Since ${\rm lenght} (\gamma_n)\leq
Mt_n$, this implies that $\gamma_n$ is $Mt_n$-Lipschitz with respect to $d$,
whence $M$-Lipschitz with respect to the rescaled metric $d/t_n$.
It is readily seen that under this condition the map $\gamma_\omega\colon [0,1]\to X_\omega$
defined by $\gamma_\omega (t)=[(\gamma_n (t))]$ is a well-defined $M$-Lipschitz 
(whence continuous) arc. Moreover, since $d(z_n,{\rm Im}\, \gamma_n )\geq t_n$,
we have $[(z_n)]\notin {\rm Im}\, \gamma_\omega$. By Lemma~\ref{tree:lemma},
$X_\omega$ is not a real tree, a contradiction.

\vspace{.3cm}\par

$(3)\Rightarrow (4).$
Let $\ell_1,\ell_2,\ell_3$ be the edges of a geodesic triangle 
$\Delta\subseteq X$ and suppose
$\delta(\D)=d(p,\ell_2\cup \ell_3)$, where $p$ is a point of $\ell_1$.
Since ${\rm length}\, l_1\geq 2\delta (\D)$, 
a suitable parameterization of $\ell_2\cup \ell_3$ provides a 
$\delta(\D)$ detour of length at most $\sz (\D)-2\delta (\D)$. This implies that
for every $t\in \R^+$ and $\varepsilon>0$  we have $$G_X(\Omega_X (t)-\varepsilon)\leq t- 
2(\Omega_X (t) - \varepsilon).$$  

If $\Omega$ is bounded, there is nothing to prove, so, since
$\Omega_X$ is increasing, we may assume 
$\lim_{t\to\infty} \Omega (t)=+\infty$. 
Suppose now $\liminf_{t\to\infty} G_X(t)/t=\alpha>30$
and take $0<\varepsilon< (\alpha-30)/3$. Then
for $t$ sufficiently
large we have 
\begin{equation}\label{sti1:eq}
\frac{\varepsilon}{t}<\frac{1}{\alpha+2-2\varepsilon}-\frac{1}{\alpha+2-\varepsilon}
\end{equation}
and
\begin{equation}\label{sti2:eq}
t -2(\Omega_X (t)- \varepsilon)\geq G_X(\Omega_X (t)-\varepsilon)> 
(\alpha-\varepsilon) (\Omega_X (t)-\varepsilon).
\end{equation}
By~(\ref{sti2:eq}) we get
$(\Omega_X (t)-\varepsilon)/t< 1/(\alpha+2-\varepsilon)$, whence,
by~(\ref{sti1:eq}), $\Omega_X (t)/t< 1/(\alpha+2-2\varepsilon)$. 
Thus $\limsup_{t\to\infty} \Omega_X (t)/t\leq 1/(\alpha+2-2\varepsilon)<1/32$.
\vspace{.3cm}\par

$(4) \Rightarrow (1)$ is just the result proved in Theorem~\ref{main:teo}.
\end{proof}

\section{The Euclidean case}\label{final:sec}

This section is devoted to the proof of Proposition~\ref{eucli:prop}.
In what follows, for every $A,B\in\R^2$ we will denote by $\overline{A B}$ 
the distance $d(A,B)$. The following lemma readily implies $\Omega_{\R^2}(1)\geq \eta_0$.

\begin{lemma}\label{isoscele:lemma}
Let $\D=\D(B_1,B_2,B_3)\subset\R^2$ be a triangle with $\sz(\D)\leq 1$ and 
$\widehat{B_3 B_1 B_2}=\widehat{B_1 B_2 B_3}=\alpha$, and let $Q$ be the midpoint
of $[B_1,B_2]$. Then
$d(Q,[B_1,B_3]\cup [B_2,B_3])\leq \eta_0$, the equality holding if and only if
$\sz(\D)=1$ and $\cos \alpha=(\sqrt{5}-1)/2$.
\end{lemma}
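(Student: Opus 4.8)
The plan is to turn the statement into a one‑variable optimisation problem in elementary plane geometry.

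First I would choose coordinates adapted to the symmetry of $\D$: put $Q$ at the origin, the base $[B_1,B_2]$ on the $x$‑axis and $B_3$ on the positive $y$‑axis, so that $B_1=(-b,0)$, $B_2=(b,0)$ and $B_3=(0,b\tan\alpha)$, where $b=\overline{QB_1}=\overline{QB_2}$ and $\alpha\in(0,\pi/2)$ is the common base angle (it is acute, being equal to the other base angle of a nondegenerate isosceles triangle). The two lateral sides then have length $b/\cos\alpha$, hence $\sz(\D)=2b\bigl(1+\tfrac1{\cos\alpha}\bigr)$.

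Next I would compute $d\bigl(Q,[B_1,B_3]\cup[B_2,B_3]\bigr)$ explicitly. By the reflection $x\mapsto -x$ the distances from $Q$ to the two lateral sides coincide, so it suffices to treat $[B_2,B_3]$. The orthogonal projection of $Q$ onto the line $B_2B_3$ splits that segment in the ratio $\cos^2\alpha:\sin^2\alpha$, hence lies in its interior; therefore the distance from $Q$ to the segment equals the distance from $Q$ to the line, and comparing the two expressions for the area of the triangle $QB_2B_3$ gives this distance to be $b\sin\alpha$. So the claim becomes: $b\sin\alpha\le\eta_0$ whenever $2b\bigl(1+\tfrac1{\cos\alpha}\bigr)\le1$, with equality iff this perimeter equals $1$ and $\cos\alpha=(\sqrt5-1)/2$.

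For fixed $\alpha$ the quantity $b\sin\alpha$ is strictly increasing in $b$, so over the region $\sz(\D)\le1$ it is maximised when $\sz(\D)=1$, i.e. $b=\cos\alpha/\bigl(2(1+\cos\alpha)\bigr)$, where it equals $g(\cos\alpha)$ with $g(c)=\dfrac{c\sqrt{1-c}}{2\sqrt{1+c}}$, $c=\cos\alpha\in(0,1)$. It remains to maximise $g$, equivalently $g^2=\dfrac{c^2(1-c)}{4(1+c)}$, on $(0,1)$. A routine differentiation shows that the derivative of $c^2(1-c)/(1+c)$ has numerator proportional to $c(1-c-c^2)$, which on $(0,1)$ changes sign exactly once, from $+$ to $-$, at the root $c_0=(\sqrt5-1)/2$ of $c^2+c-1=0$; since $g$ vanishes at both endpoints of $(0,1)$, $c_0$ is its unique global maximum. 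Finally, using $c_0^2=1-c_0$ and $c_0(1+c_0)=1$ one gets $g(c_0)^2=c_0^5/4$, hence $g(c_0)=\tfrac12 c_0^{5/2}=\tfrac12\bigl(\tfrac{\sqrt5-1}{2}\bigr)^{5/2}=\eta_0$. The equality statement is then automatic: the bound was obtained by combining the two strict monotonicity facts above, so equality forces both $\sz(\D)=1$ and $\cos\alpha=c_0$.

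The only slightly delicate points are the verification that the point of $[B_1,B_3]\cup[B_2,B_3]$ nearest to $Q$ is an interior point of a lateral side (not the apex $B_3$, nor a base endpoint) — which is exactly what makes the clean formula $b\sin\alpha$ valid — and the check that the unique interior critical point of $g$ is genuinely a maximum rather than an inflection or minimum; both are settled by the elementary sign analysis indicated above, and everything else is bookkeeping.
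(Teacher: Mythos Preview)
Your proof is correct and follows essentially the same route as the paper's: compute $d(Q,[B_1,B_3]\cup[B_2,B_3])=\tfrac12\,\overline{B_1B_2}\sin\alpha$ and $\sz(\D)=\overline{B_1B_2}(1+\cos\alpha)/\cos\alpha$, then maximise the resulting ratio $\sin\alpha\cos\alpha/\bigl(2(1+\cos\alpha)\bigr)$ over $\alpha\in(0,\pi/2)$. The paper merely asserts these formulae and the location of the maximum, whereas you supply the supporting details (coordinates, the check that the foot of the perpendicular lies in the interior of the lateral side, and the explicit critical-point analysis via $c=\cos\alpha$), but the underlying argument is the same.
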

\begin{proof}
It is easily seen that $d(Q,[B_1,B_3]\cup [B_2,B_3])=(\overline{B_1 B_2}\sin\alpha)/2$,
while $\sz(\D)=\overline{B_1 B_2}(1+\cos\alpha)/(\cos\alpha)$. Let $\alpha_0\in (0,\pi/2)$ be such
that $\cos\alpha_0= (\sqrt{5}-1)/2$. An easy computation shows that 
for every $\alpha\in (0,\pi/2)$ we have 
$$
\frac{\delta(\D)}{\sz (\D)}=\frac{\sin\alpha\cos\alpha}{2(1+\cos\alpha)}\leq 
\frac{\sin\alpha_0 \cos\alpha_0}{2(1+\cos\alpha_0)}=\eta_0,
$$
the equality holding if and only if $\alpha=\alpha_0$, whence the conclusion. 
\end{proof}

\vspace{1pt}\noindent{\it Proof of Theorem~\ref{main:teo}.}
It will be sufficient to show that $\Omega_{\R^2} (1)=\eta_0$:
in fact, 
any rescaling of $\R^2$ is isometric to $\R^2$ itself,
so for any $t>0$ we obviously have $\Omega_{\R^2} (t)=
\Omega_{\R^2} (1)\cdot t$.

Let $\D=\D(A_1,A_2,A_3)\subseteq\R^2$ be a triangle with $\sz(\D)\leq 1$. 
Up to reordering
$A_1,A_2,A_3$, we may suppose that $P\in [A_1,A_2]$ exists such that
$\delta(\D)=\{d(P,[A_1,A_3]\cup [A_2,A_3])=d(P,[A_2,A_3])$.

\begin{figure}
\begin{center}
\input{seconda.pstex_t} 
\caption{Computing $\Omega_{\R^2} (1)$: the case when $\widehat{A_1 A_2 A_3}\geq \pi/2$.}
\label{seconda:fig}
\end{center}
\end{figure}

If $ \widehat{A_1 A_2 A_3} \geq \pi/2$, then take 
$A'_3\in [A_1,A_3]$ in such a way that
$\widehat{A_1 A_2 A'_3}=\pi/2$, set $\D'=\D(A_1,A_2,A'_3)$ and let $P'\in [A_1,A_2]$
be the farthest point from $[A_1,A'_3]\cup [A_2,A'_3]$. 
Of course we have $d(P',[A_1,A'_3]\cup [A_2,A'_3])\geq \delta (\D)$
and $\sz (\D')\leq \sz (\D)$. 
Let now $\ell$ be the line passing through $A'_3$ which is parallel 
to $[A_1,A_2]$, take $A''_3\in\ell$ in such a way that
$\overline{A_1 A''_3}=\overline{A_2 A''_3}$ and set
$\D''=\D(A_1,A_2,A''_3)$. An easy computation
shows that if $P''$ is the midpoint of $[A_1,A_2]$, then
$d(P'',[A_1,A''_3]\cup [A_2,A''_3])\geq d(P',[A_1,A'_3]\cup [A_2,A'_3])$,
while $\sz(\D'')\leq \sz (\D')$. Since $\sz(\D'')\leq \sz(\D)\leq 1$
and $d(P'',[A_1,A''_3]\cup [A_2,A''_3])\geq \delta (\D)$, 
by Lemma~\ref{isoscele:lemma} we have $\delta(\D)\leq \eta_0$.

\begin{figure}
\begin{center}
\input{terza.pstex_t} 
\caption{Computing $\Omega_{\R^2} (1)$: the case when $\widehat{A_1 A_2 A_3}\leq \pi/2$.}
\label{terza:fig}
\end{center}
\end{figure}

Suppose now $ \widehat{A_1 A_2 A_3}, \widehat {A_2 A_1 A_3} \leq \pi/2$, and let
$\ell_i$ be the half-line with endpoint $A_3$ containing $A_i$. It is easily seen
that $\delta(\D)=d(P,\ell_1)=d(P,\ell_2)$. Let now $r$ be the line
orthogonal to $[A_3,P]$ and passing through $P$, and set
$A'_i=\ell_i\cap r$, $\D'=\D(A'_1,A'_2,A_3)$. Of course $\overline{A'_1 A_3}=\overline{A'_2 A_3}$
and $d(p',[A'_1,A_3]\cup [A'_2,A_3])=\delta(\D)$, while an easy computation shows that
$\sz (\D')\leq \sz(\D)\leq 1$. As before, Lemma~\ref{isoscele:lemma} now 
implies  $\delta(\D)\leq \delta (\D')\leq \eta_0$.

We have thus proved that if $\D\subset \R^2$ is a triangle with $\sz(\D)\leq 1$, 
then $\delta(\D)\leq \eta_0$. This implies 
$\Omega_X (1)\leq \eta_0$, whence the conclusion.
\finedimo

\section{Some remarks on the optimal constants}\label{final2:sec}

This section is entirely devoted to the proof of
Proposition~\ref{est2:prop}. We will show that, if
$(X,d)$ be is geodesic space such that
$$
\sup_t \frac{\Omega_X (t)}{t}=\alpha < \varepsilon_H,
$$
then $(X,d)$ is a real tree.
The idea of the proof is as follows: we
realize $X$ as an isometrically embedded
subspace of the asymptotic cone of a suitable geodesic space $Y$, 
chosen in such a way that $\limsup_{t\to \infty}
\Omega_Y (t)/t<\varepsilon_H$. This ensures that $Y$ is hyperbolic,
which in turn implies that $X$ is a real tree.

So, let $p\in X$ be a fixed basepoint, and  
let $Y\subseteq X\times\R$ be defined as follows:
$$
Y=\left(\{p\}\times\R\right) \cup \left(\bigcup_{i\in\matN} X\times \{i\}\right).
$$
We define a distance $\widetilde{d}$ on $Y$ 
by setting:
$$
\widetilde{d} ((x,t),(x',t'))=
\left\{
\begin{array}{lll}
i\cdot d(x,p) + j\cdot d(p,x') + |t-t'| & {\rm if} & t\neq t'\\
i\cdot d(x,x') & {\rm if} & t=t'
\end{array}
\right.
$$
It is easily seen that $(Y,\widetilde{d})$ is a geodesic metric space,
and that in $(Y,\widetilde{d})$ there are not unexpected geodesics. More precisely,
take points $(x,s),(x',s')\in Y$: if $s=s'=i$ for some $i\in\matN$, then
a path $\gamma\colon [0,1]\to Y$ joining $(x,s)$ to $(x',s')$
is a geodesic if and only if $\gamma (t)=(\psi(t) , i)$  for some
geodesic $\psi\colon [0,1]\to X$ in $X$ joining $x$ to $x'$;
if $s\neq s'$, then  a path $\gamma\colon [0,1]\to Y$ joining $(x,s)$ to $(x',s')$
is a geodesic if and only if, up to reparameterization, $\gamma=\psi'\ast\varphi\ast\psi$, where
$\psi$ (respectively $\psi'$) is a (possibly constant) geodesic joining $(x,s)$ to $(p,s)$ (respectively 
$(x',s')$ to $(p,s')$), and $\varphi (t)=(p, ts'+(1-t)s)$.

Thus, let $\D\subseteq Y$ be a triangle with vertices $z_i=(x_i,s_i)$, $i=1,2,3$,
and let $l_i$ be the edge of $\D$ opposite to $z_i$.
Up to reordering, we may suppose that $\overline{z}=(\overline{x},\overline{s})\in l_1$
exists such that $\widetilde{d}(\overline{z},l_2 \cup l_3)=\delta (\D)$, and that
$s_2 \leq s_3$, whence $s_2\leq \overline{s}\leq s_3$.
If $\overline{s}\notin \matN$, then $\overline{x}=p$, and 
it is easily seen either $\overline{z}$ is a vertex of $\D$,
whence $\delta(\D)=0$, 
or $s_2<\overline{s}<s_3$. In this case
$z_2$ and $z_3$ lie in different connected components of
$Y\setminus \{\overline{z}\}$, so $\overline{z}\in l_2\cup l_3$,
and $\delta(\D)=0$ again.

So let us suppose $\overline{s}=n\in\matN$.
We set $l'_1=l_1\cap (X\times \{n\})$, and for $i=2,3$
we define $l'_i$ as follows: 
$l'_i=l_i\cap (X\times \{n\})$ if $l_i\cap (X\times \{n\})\neq\emptyset$,
and $l'_i=\{(p,n)\}$ otherwise. The previous description of the geodesics of $Y$ implies
that $l'_2\cup l'_3\subseteq l_2\cup l_3$, and that $\D'=l'_1\cup l'_2\cup l'_3$
is a geodesic triangle in $Y$ with vertices $z'_1,z'_2,z'_3$, where
$z'_i=z_i$ if $s_i=n$, $z'_i=(p,n)$ otherwise. Moreover $\D'$
is contained in $X\times \{n\}$, so it is 
the rescaled copy of 
a triangle $\D''$ in $(X,d)$. Thus  
$$
\frac {\delta (\D)}{\sz (\D)}=\frac{d(\overline{z},l_2\cup l_3)}{\sz(\D)}
\leq \frac{d(\overline{z}, l'_1\cup l'_2)}{\sz(\D')}\leq
\frac{\delta(\D')}{\sz(\D')}=\frac{\delta(\D'')}{\sz(\D'')}\leq \alpha.
$$
We have thus proved that
$$
\sup_t \frac{\Omega_Y (t)}{t}=\alpha \leq \varepsilon_H,
$$
whence in particular $\limsup_{t\to\infty} \Omega_Y (t)/t< \varepsilon_H$.
By the very definition of $\varepsilon_H$, this implies that
$Y$ is hyperbolic.

Now let $\omega$ be a ultrafilter, and consider the asymptotic cone
$Y_\omega = (Y_\omega, ((p,n)), (n))$. 
By Theorem~\ref{main2:teo}, $Y_\omega$ is a real tree.
Let us consider the map $\psi\colon X\to Y_\omega$ defined
by $\psi (x)=[(x,n)]$. It is easily seen that $\psi$ is a well-defined
isometric embedding. Since $X$ is geodesic, this readily implies that
$X$ is itself a real tree, whence the conclusion.
\finedimo

\begin{rem}
Let $Y$ be a geodesic space with $\limsup_{t\to\infty} \Omega_Y (t)/t=\alpha$.
A geodesic $\gamma_\omega\colon [0,1]\to Y_\omega$ joining 
$x_\omega=[(x_n)], y_\omega=[(y_n)]$ is called \emph{good} if it is
the $\omega$-limit of geodesics in $X$ joining $x_n$ to $y_n$, \emph{i.e.}~if
there exist geodesics $\gamma_n\colon [0,1]\to X$ such that $\gamma_\omega (t)=
[(\gamma_n (t))]$ for every $t\in [0,1]$. A slight modification of the argument
showing that any asymptotic cone of a hyperbolic space is uniquely geodesic
(see Proposition~\ref{main3:prop}, $(1)\Rightarrow (2)$) proves that
if $\gamma'$ is any geodesic in $Y_\omega$ of length $\ell$, then 
a good geodesic $\gamma$ in $Y$ exists which has the same endpoints of $\gamma'$
and is such that $d_\omega (\gamma(t),\gamma' (t))\leq 4\alpha\ell$ for every
$t\in [0,1]$. Now, it is readily seen that if $\D\subseteq Y_\omega$ is a triangle
with sides given by good geodesics, then $\delta (\D)\leq \alpha \sz(\D)$.
These facts imply that $\sup_t (\Omega_{Y_\omega} (t)/t)\leq 5\alpha$.
By Proposition~\ref{main3:prop}, this implies in turn $\varepsilon_
T\leq 5\varepsilon_H$. Note however that this inequality does not give any
information, since we already know that $1/32\leq \varepsilon_H\leq \varepsilon_T<\eta_0$,
and $32\eta_0\approx 4.8<5$.
\end{rem}




\bibliographystyle{amsalpha}
\bibliography{biblio}

\end{document}